\newcommand{\RomanNumeralCaps}[1]
    {\MakeUppercase{\romannumeral #1}}
\newtheorem{theorem}{Theorem}[section]
\newtheorem{corollary}[theorem] {Corollary}
\newtheorem{definition}[theorem]{Definition}
\newtheorem{example}[theorem]{Example}
\newtheorem{lemma}[theorem]{Lemma}
\newtheorem{proposition}[theorem]{Proposition}
\newtheorem{remark}[theorem]{Remark}
\newcommand\R{\mathbb{R}}
\newcommand\Z{\mathbb{Z}}
\newcommand\C{\mathbb{C}}
\newcommand{\TC}{\mathrm{TC}}
\newcommand{\ct}{\mathrm{cat}}
\newcommand{\sct}{\mathrm{secat}}
\newcolumntype{x}[1]{>{\centering\arraybackslash}p{#1}}
\begin{document}
\title[]{Equivariant parametrized topological complexity}

\author[N. Daundkar]{Navnath Daundkar}
\address{Department of Mathematics, Indian Institute of Science Education and Research Pune, India.}
\email{navnath.daundkar@acads.iiserpune.ac.in}

\thanks{}

\begin{abstract} 
In this paper, we define and study an  equivariant analogue of Cohen, Farber and Weinberger's parametrized topological complexity.  We show that several results in the non-equivariant case can be extended to the equivariant case. For example, we establish the fibrewise equivariant homotopy invariance of the sequential equivariant parametrized topological complexity. We obtain several bounds on sequential equivariant topological complexity involving the equivariant category. We also obtain the cohomological lower bound and the dimension-connectivity upper bound on the sequential equivariant parametrized topological complexity.
In the end, we use these results to compute the sequential equivariant parametrized topological complexity of equivariant Fadell-Neuwirth fibrations and some equivariant fibrations involving generalized projective product spaces. 
\end{abstract}
\keywords{Parametrized topological complexity, equivariant sectional category}
\subjclass[2020]{55M30, 55S40, 55R10, 55R91}
\maketitle

\section{Introduction}\label{sec:intro}
A solution to the motion planning problem in robotics is given by a motion planning algorithm, which is a function that takes as inputs a pair of configurations of a mechanical system and produces a continuous path connecting these two configurations as an output. More specifically, consider $X$ to be the configuration space of a mechanical system and $X^I$ to be the free path space of $X$ with a compact open topology.
The free path space fibration $\pi:X^I\to X\times X$ is defined by $\pi(\gamma)=(\gamma(0),\gamma(1))$.
A \emph{motion planning algorithm} is then a section of a free path space fibration.

Farber \cite{FarberTC} introduced the concept of topological complexity to analyze the difficulty of computing a motion planning algorithm for the configuration space $X$ of a mechanical system.
The \emph{topological complexity} of a space $X$, denoted by $\TC(X)$, is defined as the smallest positive integer $k$ for which $X\times X$ can be covered by open sets $\{U_1,\dots, U_k\}$ such that each $U_i$ admits a continuous local section of $\pi$.
Farber showed that a motion planning algorithm in the configuration space cannot be continuous unless it is contractible. Therefore, for non-contractible spaces, the topological complexity is at least two.
The numerical invariant topological complexity 
has been extensively studied over the past two decades.

To generalise topological complexity, Rudyak introduced its sequential analogue in \cite{RUD2010}. This models the motion planning problem for robots that need to visit several places in between their initial and final stages while performing their tasks. 
We briefly recall the definition.
Consider the  generalized  free path space fibration $\pi_n: X^I\to X^n$ of path-connected spaces $X$
 \begin{equation}\label{eq:general-free-path-fib}
   \pi_n(\gamma)=\bigg(\gamma(0), \gamma(\frac{1}{n-1}),\dots,\gamma(\frac{n-2}{n-1}),\gamma(1)\bigg).    \end{equation}
The \emph{sequential topological complexity} of $X$ is the smallest positive integer $k$ for which $X^n$ is covered by open sets $\{U_1,\dots, U_k\}$, such that each $U_i$ admits a continuous local section of $\pi_n$.
Note that by definition, it follows that $\TC_2(X)=\TC(X)$. 
Several properties of topological complexity have been generalized and introduced in their symmetrized version in \cite{gonzalezhighertc}.

There is an old invariant called LS-category, a close relative of sequential topological complexity, which was introduced by Lusternik and Schnirelmann in \cite{LScat}. The \emph{LS-category} of a space $X$ is denoted by $\ct(X)$, which is the least number of open subsets which cover $X$ such that the inclusion on each open set is nullhomotopic.
The authors of \cite{gonzalezhighertc} prove the following famous inequality
\[\mathrm{cat}(X^{k-1})\leq \TC_k(X)\leq \mathrm{cat}(X^k).\]

The notion of parametrized topological complexity was introduced by Cohen, Farber and Weinberger in \cite{PTC}, and the corresponding sequential analogue was introduced by Farber and Paul in \cite{SequentialPTC}.   
These parametrized motion planning algorithms offer enhanced universality and flexibility, enabling effective operation across various scenarios incorporating external conditions. These conditions are considered parameters and integral components of the algorithm's input.
A parametrized motion planning algorithm takes as inputs a pair of configurations living under the same external conditions and produces a continuous motion of the system, which remains constant under the external conditions.

We now define sequential parametrized topological complexity in brief. For a fibration $p: E \to B$, 
consider the fibre product $E^n_B$, the space of all
$n$-tuples of points in $E$ all of which lie in a common fibre of $p$. The space $E^I_B$ denotes the space of all paths in $E$ with image in a
single fibre. Note that the fibration defined in \eqref{eq:general-free-path-fib} restricts to the subspace $E^I_B$ of $E^I$. We denote this fibration by $\Pi_n:E^I_B\to E^n_B$.
The \emph{sequential parametrized topological complexity} of a fibration $p : E \to B$ denoted by $\TC_n[p:E\to B]$  is the smallest integer $k$ such that there is an open cover $\{U_1,\dots, U_k\}$ of $E^n_B$, where each open set $U_i$ admits a continuous section of $\Pi_n$. Note that $\TC_2[p:E\to B]$ is known as the \emph{parametrized topological complexity} of Cohen, Farber and Weinberger. The reader is referred to \cite{PTC}, \cite{PTCcolfree}, \cite{SequentialPTC}, \cite{ptcspherebundles} for several interesting results related to (sequential) parametrized topological complexity. We also mention that the notion of parametrized topological complexity of fibrations is extended to fibrewise spaces in \cite{fibrewise} by Garc\'{\i}a-Calcines.

The invariants we have discussed so far are special cases of a more general notion: the Schwarz genus of a fibration. Schwarz introduced and studied this notion in \cite{Sva}. Later, Bernstein and Ganea \cite{secat} extended and studied this notion for any map. 
Let's now define this notion.
The \emph{sectional category} of a map $p:E\to B$, denoted $\sct(p)$, is the smallest integer $k$ for which $B$ can be covered by $k$ open sets $W_1,\dots,W_k$, where each $W_i$ admits a continuous homotopy section of $p$. 
If $p: E\to B$ is a fibration, then $\sct(p)$ coincides with another invariant called the \emph{Schwarz genus} of a fibration, denoted $\mathrm{gen}(p)$ (see \cite{Sva}). 
For example, $\TC_n(X)=\sct(\pi_n)$, $\ct(X)=\sct(i: \{\ast\}\hookrightarrow{} X)$ and $\TC_n[p:E\to B]=\sct(\Pi_n)$.
The reader is referred to \cite{James} for a more general overview of the sectional category. 

Generalizing the concept of sectional category,
Colman and Grant \cite{EqTC} first introduced the corresponding equivariant analogue. The equivariant sectional category of a $G$-map $p:E\to B$ between $G$-spaces is denoted by $\sct_G(p)$ (see \Cref{def:eqsecat}). Colman and Grant extended some of the classical results from \cite{Sva} to the equivariant settings.
It can be observed that the (generalized) free path space fibration is a $G$-map (in fact, a $G$-fibration) (see \Cref{sec:eq-secat}). 
Colman and Grant introduced the notion of equivariant topological complexity as a consequence of the equivariant sectional category.
The sequential analogue of this concept was introduced by Bayeh and Sarkar in \cite{byehsarkareqtcn}.  
They established several interesting relationships between the equivariant category \cite{Eqlscategory} and sequential equivariant topological complexity. Additionally, they obtained new bounds on its corresponding non-equivariant counterpart.

By a (Serre) $G$-fibration we will mean a $G$-map $p : E\to B$ having the $G$-
homotopy lifting property with respect to all $G$-spaces ($G$-CW complexes).
One can observe that for a $G$-fibration $p:E\to B$, the corresponding fibre product $E^n_B$ admits a  $G$-action. Moreover the space $E^I_B$ is also $G$-invariant and the fibration $\Pi_n:E^I_B\to E^n_B$ is a $G$-map (see \Cref{sec:eqptc} for details). Therefore, expecting an equivariant analogue of parametrized topological complexity is natural.
In \Cref{sec:eqptc}, we define the sequential equivariant parametrized topological complexity as the equivariant sectional category of a $G$-map $\Pi_n$. We denote this newly defined notion by $\TC_{G,n}[p:E\to B]:=\sct_G(\Pi_n)$.    
This notion generalizes sequential parametrized topological complexity. We generalize several results from the theory of sequential parametrized topological complexity. We define the fibrewise $G$-homotopy equivalent fibrations (see \Cref{def: G-fib hteq}) and  establish the fibrewise $G$-homotopy invariance of $\TC_{G,n}[p;E\to B]$ (see \Cref{prop:fib-G-htp-inv}). Later, in \Cref{subsec:bounds}, we obtain several bounds on the sequential equivariant parametrized topological complexity involving the equivariant category of fibre product. We also obtain the cohomological lower bound \Cref{thm:coho-lb} and the dimension-connectivity upper bound in \Cref{thm:dim-conn-ub}.

\section{Sequential parametrized topological complexity}

Cohen, Farber and Weinberger introduced the notion of parametrized topological complexity in \cite{PTC} ,\cite{PTCcolfree}. 
This section briefly recalls some basic results related to this notion.

For a Hurewicz fibration $p:E\to B$, consider a subspace of the path space $E^I$, defined as follows:
\[E^I_B:=\{\gamma\in E^I \mid p\circ \gamma(t)=b ~\text{for some}~ b\in B ~\text{and for all}~ t\in[0,1] \}.\]
The fibre product corresponding to $p:E\to B$ is defined by 
\[E^n_B:=\{(e_1,\dots,e_n)\in E^n \mid p(e_i)=p(e_j) ~\text{ for }~ 1\leq i,j\leq n\}.\]
Define a map $\Pi_n: E^I_B\to E^n_B$ by 
\begin{equation}\label{eq: Pin}
\Pi_n(\gamma)= \bigg(\gamma(0), \gamma(\frac{1}{n-1}),\dots,\gamma(\frac{n-2}{n-1}),\gamma(1)\bigg) .   
\end{equation}
It follows from the appendix of \cite{PTCcolfree} that $\Pi_n$ is a Hurewicz fibration.  A section of $\Pi_n$ is called a \emph{sequential parametrized motion planning algorithm}. 

In their recent paper \cite{SequentialPTC}, Farber and Paul have established a theory for the sequential parametrized topological complexity, generalizing several results of the original parametrized topological complexity.
 
\begin{definition}
Let $p:E\to B$ be a fibration. The $n$-th sequential parametrized topological complexity of $p$ is denoted by $\TC_n[p:E\to B]$, and defined as \[\TC_n[p:E\to B]:=\sct(\Pi_n).\]   
\end{definition}

It is observed in \cite{SequentialPTC} that for a pullback fibration $q:E'\to B'$ along a map $f:B'\to B$ of $p:E\to B$, we have \[\TC_n[q:E'\to B']\leq \TC_n[p:E\to B].\]
In particular, for a fibration $p:E\to B$ with fibre $F$, we have 
\begin{equation}\label{eq:tcn-fib-leq-ptc}
    \TC_n(F)\leq \TC_n[p:E\to B].
\end{equation}
Various bounds have been established to estimate the parametrized topological complexity in \cite{PTC}, and later, these were generalized in the sequential setting in \cite{SequentialPTC}. For example, the cohomological lower bound for the parametrized topological complexity is given in \cite{PTC}, and its sequential analogue is stated in \cite{SequentialPTC}. We state the sequential version of the cohomological lower bound.

\begin{theorem}[{\cite[Proposition 6.3]{SequentialPTC}}]
Let $p:E\to B$ be a fibration, $\triangle: E\to E^n_B$ be the diagonal map and $\triangle^{\ast}:H^{\ast}(E^n_B;R)\to H^{\ast}(E;R)$ be the corresponding induced homomorphism. Suppose there exists cohomology classes $u_i\in \ker(\triangle^{\ast}) $ for $1\leq i\leq r$ such that their cup product  $u_1\cup \dots \cup u_r$ does not vanish. Then \[\TC_n[p:E\to B]\geq r+1.\]   
\end{theorem}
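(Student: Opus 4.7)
The plan is to reduce the statement to the well-known cohomological lower bound for the Schwarz genus of an arbitrary fibration, namely that if $q:X\to Y$ is a fibration and there exist classes $v_1,\dots,v_r\in\ker(q^\ast)$ with $v_1\cup\cdots\cup v_r\neq 0$, then $\sct(q)\geq r+1$. Applied to $q=\Pi_n:E^I_B\to E^n_B$, this would give the result provided we can replace $\ker(\Pi_n^\ast)$ by the more computable $\ker(\triangle^\ast)$ in the hypothesis.

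The key step is therefore to show that $\ker(\Pi_n^\ast)=\ker(\triangle^\ast)$. I would do this by exhibiting a homotopy equivalence $c:E\to E^I_B$ sending $e$ to the constant path at $e$; this map is well defined since a constant path trivially has image in a single fibre of $p$. A homotopy inverse is given by evaluation at $0$, say $\mathrm{ev}_0:E^I_B\to E$, $\gamma\mapsto \gamma(0)$. Clearly $\mathrm{ev}_0\circ c=\mathrm{id}_E$, and the standard contraction $H_s(\gamma)(t)=\gamma(st)$ furnishes a homotopy from $c\circ\mathrm{ev}_0$ to $\mathrm{id}_{E^I_B}$; the key point is that this homotopy preserves the constraint defining $E^I_B$, since if $\gamma$ has image in a single fibre of $p$, then so does $H_s(\gamma)$ for every $s$. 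Hence $c^\ast:H^\ast(E^I_B;R)\to H^\ast(E;R)$ is an isomorphism.

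Now observe that by construction $\Pi_n\circ c=\triangle$, so on cohomology $\triangle^\ast=c^\ast\circ\Pi_n^\ast$. Since $c^\ast$ is an isomorphism, $\ker(\triangle^\ast)=\ker(\Pi_n^\ast)$. Consequently, the given classes $u_1,\dots,u_r\in\ker(\triangle^\ast)$ also lie in $\ker(\Pi_n^\ast)$, their cup product in $H^\ast(E^n_B;R)$ is still nonzero by hypothesis, and Schwarz's cohomological lower bound for $\sct(\Pi_n)$ yields $\TC_n[p:E\to B]=\sct(\Pi_n)\geq r+1$.

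The only nontrivial point in this outline is the verification that the deformation $H_s$ stays inside the subspace $E^I_B\subset E^I$, but this is immediate from the definition of $E^I_B$. Everything else is a direct invocation of Schwarz's classical zero-divisors cup-length inequality, so I would not expect any serious obstacle.
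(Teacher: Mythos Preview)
Your argument is correct. Note, however, that the paper does not actually give its own proof of this statement: it is quoted as \cite[Proposition 6.3]{SequentialPTC} and left unproved in the text. The paper does prove the equivariant analogue (\Cref{thm:coho-lb}), and the key ingredient there is the same as yours---the $G$-homotopy equivalence $h:E\to E^I_B$ (your map $c$) satisfying $\Pi_n\circ h=\triangle$. The only difference in presentation is that you invoke Schwarz's cohomological lower bound for $\sct(\Pi_n)$ as a black box, whereas the paper's proof of \Cref{thm:coho-lb} unpacks that argument directly: assuming a cover $\{U_1,\dots,U_r\}$ by sets admitting sections of $\Pi_n$, one observes that each inclusion $U_i\hookrightarrow E^n_B$ factors through $\triangle$ up to homotopy, so $u_i$ restricts to zero on $U_i$, lifts to $H^\ast(E^n_B,U_i)$, and the product vanishes by naturality of the relative cup product. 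Both routes are standard and essentially equivalent.
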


In \cite{SequentialPTC}, Farber and Paul established a dimension-connectivity upper bound on the sequential parametrized topological complexity, generalizing the base case proved in  \cite[Proposition 7.1]{PTC}.
\begin{theorem}\cite[Propositon 6.1]{SequentialPTC}\label{thm:ptc-dim-conn-ub}
 Let $p:E\to B$ be a fibration with fibre $F$ such that spaces $E$, $B$ and $F$ are CW-complexes. Suppose that for $m\geq 0$,  $F$ is $m$-connected. Then 
 \[\TC_n[p:E\to B]< \frac{n\mathrm{dim}(F)+\mathrm{dim}(B)+1}{m+1}+1.\]
\end{theorem}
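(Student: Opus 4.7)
The plan is to apply the classical dimension--connectivity upper bound for the sectional category of a Hurewicz fibration, in its Ganea-refined form, to the fibration $\Pi_n:E^I_B\to E^n_B$. Two ingredients are required: an estimate on $\dim(E^n_B)$ and the connectivity of the fibre of $\Pi_n$.

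First I would bound $\dim(E^n_B)$. The projection $q_n:E^n_B\to B$ sending $(e_1,\ldots,e_n)\mapsto p(e_1)$ is a Hurewicz fibration with fibre $F^n$, so since $B$ and $F$ are finite-dimensional CW-complexes, $E^n_B$ has the homotopy type of a CW-complex of dimension at most $n\dim(F)+\dim(B)$.

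Next I would compute the connectivity of the fibre of $\Pi_n$ over a point $(e_1,\ldots,e_n)\in E^n_B$ with common image $b\in B$. This fibre consists of paths in $F_b:=p^{-1}(b)$ passing through $e_i$ at time $(i-1)/(n-1)$; splitting at the interior subdivision points gives a homeomorphism with $\prod_{i=1}^{n-1}P(F_b;e_i,e_{i+1})$, where $P(F_b;x,y)$ denotes the space of paths in $F_b$ from $x$ to $y$. Each such factor has the weak homotopy type of $\Omega F_b$, which is $(m-1)$-connected since $F$ is $m$-connected; hence the $(n-1)$-fold product, and therefore the fibre of $\Pi_n$, is $(m-1)$-connected.

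Finally I would invoke the Schwarz--Ganea bound: for a Hurewicz fibration over a CW-complex $Y$ with $r$-connected fibre, the sectional category is at most $\lceil(\dim Y+1)/(r+2)\rceil$. Applied to $\Pi_n$ with $r=m-1$ and $\dim(E^n_B)\le n\dim(F)+\dim(B)$, this yields
\[\TC_n[p:E\to B]=\sct(\Pi_n)\le\left\lceil\frac{n\dim(F)+\dim(B)+1}{m+1}\right\rceil<\frac{n\dim(F)+\dim(B)+1}{m+1}+1.\]
The main subtlety is that one must use the Ganea-refined form of the Schwarz inequality (denominator $r+2$) rather than the naive obstruction-theoretic version (denominator $r+1$); the improvement stems from the characterisation $\sct(p)\le k+1$ iff the $k$-th Ganea fibration $G_k(p)\to Y$ admits a section, whose fibre $F^{*(k+1)}$ has connectivity $(k+1)(r+2)-2$. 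Without this refinement one would obtain only $m$ in the denominator, falling short of the claimed bound.
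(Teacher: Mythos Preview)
Your approach is correct and essentially matches the paper's: the paper cites this non-equivariant statement from \cite{SequentialPTC} without proof, but its proof of the equivariant generalization (\Cref{thm:dim-conn-ub}) proceeds exactly as you do---identifying the fibre of $\Pi_n$ as (homotopy equivalent to) $(\Omega F)^{n-1}$, noting it is $(m-1)$-connected since $F$ is $m$-connected, bounding $\dim(E^n_B)\le n\dim(F)+\dim(B)$ via the fibration $E^n_B\to B$ with fibre $F^n$, and then invoking the Schwarz-type dimension--connectivity bound for sectional category (there quoted as \Cref{thm:ub-gsecat}). The only cosmetic difference is that the paper writes the fibre directly as $(\Omega F)^{n-1}$ rather than as your product of path spaces $P(F_b;e_i,e_{i+1})$, which are of course homotopy equivalent to $\Omega F_b$ once $F$ is path-connected.
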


We now present the computations of the sequential parametrized topological complexity for some fibrations. 
Sarkar and Zvengrowski \cite{sarkargpps} introduced a class of topological spaces called generalized projective product spaces. We will now recall the definition of these spaces.
Let $M$ and $N$ be CW-complexes with involutions $\tau \colon M \to M$ and $\sigma \colon N \to N$, where $\sigma$ is fixed-point-free. The following identification space is called \emph{generalized projective product space}
\begin{equation}\label{eq:gpps}
X(M, N) :=\displaystyle\frac{M \times N}{(x,y)\sim (\tau(x), \sigma(y))}.
\end{equation}
Note that this class of manifolds includes all projective product spaces \cite{Davis} and Dold manifolds \cite{Dold}.
The author and Soumen Sarkar studied the LS-category and topological complexity of generalized projective product spaces in \cite{DaundSarkargpps}.

\begin{example}
{\normalfont 
Observe that  there is a fibre bundle \begin{equation}\label{eq:gpps-fib-bundle}
    M \xhookrightarrow{} X(M, N) \stackrel{p}\longrightarrow N/\left<\sigma \right>
\end{equation}
defined by $p([(x,y)])=[y]$, where $N/\left<\sigma \right>$ is the orbit space of the group $\left<\sigma \right>$ action on $N$ induced by the involution $\sigma$. 
 
It follows from \Cref{thm:ptc-dim-conn-ub} that \[\TC_n[p:X(M,N)\to N/\left<\sigma \right>]< n\dim(M)+\dim(N)+2.\] In particular, \[\TC_n[p:X(M,N)\to N/\left<\sigma \right>]\leq n\dim(M)+\dim(N)+1.\]

Also, from \eqref{eq:tcn-fib-leq-ptc}, we have the following lower bound: \[\TC_n(M)\leq \TC_n[p:X(M,N)\to N/\left<\sigma \right>].\]
Observe that if $N=S^0$ with free involution, then $N/\left<\sigma \right>$ is a singleton and $X(M,N)\cong M$. Thus, $\TC_n[p:X(M,S^0)\to S^0/\left<\sigma \right>]=\TC_n(M)$.

Note that if $N=S^1$ with the antipodal involution and $\TC_n(M)=n\dim(M)+1$, then \[n\dim(M)+1\leq\TC_n[p:X(M,N)\to N/\left<\sigma \right>]\leq n\dim(M)+2.\]
One example of spaces with involutions having maximal sequential topological complexity are orientable surfaces of positive genus. An orientable surface of genus $g\geq 1$ can be embedded in $\R^3$ to admit antipodal involution. It was proved in \cite{seqtcsurfaces} that $\TC_n(\Sigma_g)=n\dim(\Sigma_g)+1=2n+1$.

In some cases, we can now compute the exact value of sequential parametrized topological complexity.
Suppose for $d\geq 2$, consider $M=S^d$ with an involution $\tau$ obtained by reflecting $S^d$ across the $X$-axis. Observe that $p:X(S^d,N)\to N/\left<\sigma \right>$ is a sphere bundle with structure group $\Z_2=\left<\tau \right>$. One can generalize \cite[Example 5.9]{EqTC} to obtain $\TC_{\Z_2,n}(S^d)=n+1$. Then it follows from \cite[Theorem 3.4]{ptcrelateinv} that \[\TC_n[p:X(S^d,N)\to N/\left<\sigma \right>]\leq \TC_{\Z_2,n}(S^d)=n+1.\]
From \eqref{eq:tcn-fib-leq-ptc} we have $\TC_n(S^d)\leq \TC_n[p:X(S^d,N)\to N/\left<\sigma \right>]$.
Therefore, if $d$ is even, we have $ \TC_n[p:X(S^d,N)\to N/\left<\sigma \right>]=n+1$. If $d$ is odd, then $\TC_n[p:X(S^d,N)\to N/\left<\sigma \right>]\in\{n, n+1\}$.
}
\end{example}

\begin{example}
{\normalfont 
In \cite{Milnor}, Milnor introduced a class of submanifolds of the products of real and complex projective spaces to define generators for the unoriented cobordism algebra.We define these manifolds as follows:
Let  $r$ and $s$ be integers such that $0\leq s\leq r$. 
A \emph{real Milnor manifold}, denoted by $\mathbb{R}M_{r,s}$, is the submanifold of the product $\mathbb{R}P^r \times \mathbb{R}P^s$ of real projective spaces. It is defined as follows:
\[\mathbb{R}M_{r,s}:=\Big\{\big([x_0: \dots: x_r], [y_0:\dots:y_s]\big) \in \mathbb{R}P^r \times \mathbb{R}P^s\mid x_0y_0 +\hspace{1mm}\cdots \hspace{1mm}+ x_sy_s=0\Big\}.\] 
The real Milnor manifold $\mathbb{R}M_{r,s}$ is an $(s+r-1)$-dimensional closed, smooth manifold.
There is a fiber bundle:
\begin{equation}\label{eq: fbRHrs}
 \mathbb{R}P^{r-1} \stackrel{i}{\hookrightarrow} \mathbb{R}M_{r,s} \stackrel{p}{\longrightarrow} \mathbb{R}P^{s}.
\end{equation}
\emph{A complex Milnor manifold}, denoted by $\C M_{r,s}$, can be defined in a  similar manner:
\[\C M_{r,s}:=\Big\{\big([z_0:\dots:z_r],[w_0:\dots:w_s]\big)\in \C P^r \times \C P^s\mid z_0\overline{w}_0+\cdots+z_s\overline{w}_s=0\Big\}.\] 
Note that $\C M_{r,s}$ is a closed, smooth manifold of dimension $2(s+r-1)$.
Just as in the real case, $\C M_{r,s}$ is the total space of the fiber bundle:
\begin{equation}\label{eq: fbCHrs}
 \C P^{r-1} \stackrel{i}{\hookrightarrow} \C M_{r,s} \stackrel{q}{\longrightarrow} \C P^{s}.
\end{equation}
The author and B. Singh \cite{daundkarsingh} studied sequential topological complexity of $\R M_{r,s}$ and  $\C M_{r,s}$, obtaining several sharp bounds and computing exact values in various cases.

Note that, using  \eqref{eq:tcn-fib-leq-ptc} and \cite[Proposition 6.1] {SequentialPTC}, we obtain 
\[\TC_n(\R P^{r-1})\leq\TC_n[p:\mathbb{R}M_{r,s} \stackrel{}{\longrightarrow} \mathbb{R}P^{s}]\leq n(r-1)+s+1.\]
The topological complexity of projective spaces has been studied in \cite{FarberTCproj} and the corresponding sequential analogue is discussed in \cite{tcnproj}.

If $r=2^m+1$, then it is known that $\TC_n(\R P^{r-1})=n(r-1)$. Thus, for $r=2^m+1$, we have
\[n(r-1)\leq\TC_n[p:\mathbb{R}M_{r,s} \stackrel{}{\longrightarrow} \mathbb{R}P^{s}]\leq n(r-1)+s+1.\]

Since $\C M_{r,s}$ is simply connected, applying \Cref{thm:ptc-dim-conn-ub} gives us
\[\TC_n[q:\C M_{r,s} \stackrel{}{\longrightarrow} \C P^{s}]\leq n(r-1)+s+1.\]
Since $\TC_n(\C P^{r-1})=n(r-1)+1$, using \eqref{eq:ptc-fib-ineq} we obtain
\[n(r-1)+1=\TC_n(\C P^{r-1})\leq \TC_n[q:\C M_{r,s} \stackrel{}{\longrightarrow} \C P^{s}].\]
Thus, if $s=1$, then \[n(r-1)+1\leq \TC_{n}[\C M_{r,s} \stackrel{q}{\longrightarrow} \C P^1]\leq  n(r-1)+2.\]}
\end{example}

\section{Equivariant sectional category}\label{sec:eq-secat}
The sectional category of a Hurewicz fibration was introduced by \v{S}varc in  \cite{Sva}. Colman and Grant \cite{EqTC}  generalized this notion and introduced the equivariant sectional category. Let $G$ be a compact Lie group and $E, B$ be $G$-spaces such that $p\colon E \to B$ is a $G$-map.  The symbol $\simeq_G$ denotes the $G$-homotopy equivalence. In this paper, when we refer to a group $G$, we mean a compact Lie group.
\begin{definition}[{\cite[Definition 4.1]{EqTC}}\label{def:eqsecat}]
Let $p:E\to B$ be a $G$-map. 
The equivariant sectional category of $p$, denoted by $\sct_G(p)$, is the least positive integer $k$ such that there is a $G$-invariant open cover $\{U_1, \ldots, U_k\}$ of $B$ and $G$-maps $s_i \colon U_i \to E$ for $i=1, \ldots, k$, such that $p\circ s_i \simeq_G i_{U_i}$, where $i_{U_i} : U_i \hookrightarrow B$ is the inclusion map.
\end{definition}
If no such $k$ exists, we say $\sct_G(p)=\infty$.
If $G$ is a trivial group, then $\sct_G(p)$ is called the sectional category of $p$, denoted by $\sct(p)$. 

Fadell \cite{Fadelleqcat} introduced the notion of $G$-equivariant LS-category for $G$-spaces, and it was later studied by 
Marzantowicz  in \cite{Eqlscategory},  Clapp and Puppe in \cite{Clapp-Puppe}, Colman \cite{Colmaneqcat}, and  Angel, Colman, Grant, and Oprea in \cite{Angel-Colman-Grant-Oprea-Moritainv-eqcat}. This homotopy invariant of a $G$-space $X$ is denoted by $\ct_G(X)$.
Before defining this notion, we recall the notion of $G$-categorical sets. An invariant open set $U$ of a $G$-space $X$ is called \emph{$G$-categorical} if the inclusion $i_{U}:U\hookrightarrow X$ is $G$-homotopic to the map that takes values in a single orbit. 

\begin{definition}
For a $G$-space $X$, the $G$-equivariant category $\ct_G(X)$ is defined as the least positive integer $r$ such that $X$ can be covered by $r$ $G$-categorical sets.  
\end{definition}

Let $H$ be a closed subgroup of $G$ and $X$ be a $G$-space. The $H$-fixed point set of $X$ is denoted by $X^H$ and defined as \[X^H:=\{x\in X \mid hx=x ~\text{ for all }~ h\in H\}.\]
\begin{definition}
    A $G$-space $X$ is said to be $G$-connected if, for any closed subgroup $H$ of $G$, the $H$-fixed point set $X^H$ is path-connected.
\end{definition}
It can be observed that  if $x\in X$ is fixed point of a $G$-action and if $X$ is a $G$-connected space, then for the inclusion map $i\colon \{x\} \to X$, $\sct_G(i\colon \{x\} \to X) = \ct_G(X)$; see \cite[Corollary 4.7]{EqTC}. Additionally, if $G$ is trivial, then $\ct_G(X)=\ct(X)$.

We now recall the notion of sequential (higher) equivariant topological complexity introduced by Bayeh and Sarkar in \cite{byehsarkareqtcn}. This notion generalizes the equivariant topological complexity introduced by Colman and Grant in \cite{EqTC}.

Let $X$ be a $G$-space. Observe that the path space admits $G$-action via $(g\cdot \gamma)(t):=g\cdot \gamma(t)$. The product $X^n$ is also a $G$-space with the diagonal $G$-action. One can check that the fibration \eqref{eq:general-free-path-fib} is a $G$-fibration. 
\begin{definition}
Let $X$ be a $G$-space. The sequential equivariant topological complexity is denoted by $\TC_{G,n}(X)$ is the least positive integer $k$ such that the space $X^n$ is covered by $k$-many $G$-invariant open sets with each of which having an $G$-equivariant section of $\pi_n$.  In other words, $\TC_{G,n}(X)=\sct_G(\pi_n)$.
\end{definition}

In the next section, we introduce sequential equivariant parametrized topological complexity, a special case of which is sequential equivariant topological complexity.

Several results from \cite{EqTC} are generalized by Bayeh and Sarkar in \cite{byehsarkareqtcn}.
The following result is a sequential analogue of \cite[Corollary 5.8]{EqTC}. We couldn't find this result in Bayeh and Sarkar's paper \cite{byehsarkareqtcn}.
\begin{proposition}\label{prop:eqcat-htc-ineq}
Let $X$ be a $G$-space and $H$ be a stabilizer of some $a\in X$.
Then \[\ct_H(X^{n-1})\leq \TC_{G,n}(X).\]
\end{proposition}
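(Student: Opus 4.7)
The plan is to mirror the classical proof that $\ct(X^{n-1}) \leq \TC_n(X)$ while tracking equivariance throughout. Suppose $\TC_{G,n}(X) = k$, witnessed by a $G$-invariant open cover $\{U_1, \ldots, U_k\}$ of $X^n$ together with $G$-equivariant homotopy sections $s_i \colon U_i \to X^I$ of $\pi_n$. Since $\pi_n$ is a $G$-fibration, I first invoke the $G$-homotopy lifting property to upgrade each $s_i$ to a strict $G$-section satisfying $\pi_n \circ s_i = i_{U_i}$. Consequently, for every $u = (u_1, \ldots, u_n) \in U_i$, the path $\gamma_u := s_i(u)$ evaluates to $\gamma_u((m-1)/(n-1)) = u_m$ for $m = 1, \ldots, n$.

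Next, I use the $H$-fixed point $a$ to shed one coordinate. Define $\iota \colon X^{n-1} \to X^n$ by $\iota(x_1, \ldots, x_{n-1}) = (a, x_1, \ldots, x_{n-1})$. Because $Ha = a$, this map is $H$-equivariant with respect to the diagonal $H$-actions, so $V_i := \iota^{-1}(U_i)$ is an $H$-invariant open subset of $X^{n-1}$, and the family $\{V_1, \ldots, V_k\}$ covers $X^{n-1}$.

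The key construction is to exhibit each $V_i$ as $H$-categorical in $X^{n-1}$. For $x \in V_i$ write $\gamma_x := s_i(\iota(x))$; then $\gamma_x(0) = a$ and $\gamma_x(m/(n-1)) = x_m$ for $m = 1, \ldots, n-1$. I define the homotopy
\[
F_i \colon V_i \times I \to X^{n-1}, \qquad F_i(x,t) = \left(\gamma_x\!\left(\tfrac{t}{n-1}\right),\, \gamma_x\!\left(\tfrac{2t}{n-1}\right),\, \ldots,\, \gamma_x(t)\right).
\]
At $t = 1$ this recovers the inclusion $V_i \hookrightarrow X^{n-1}$, and at $t = 0$ it is the constant map to the $H$-fixed point $(a, \ldots, a)$, whose orbit is a singleton. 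The $H$-equivariance of $F_i$ is inherited from that of $s_i$ and $\iota$ via the identity $\gamma_{hx} = h \cdot \gamma_x$. Hence $\{V_1, \ldots, V_k\}$ is an $H$-categorical cover of $X^{n-1}$, which gives $\ct_H(X^{n-1}) \leq k = \TC_{G,n}(X)$.

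The main obstacle is the first step: converting homotopy $G$-sections into strict $G$-sections. This relies on the $G$-HLP applied to $\pi_n$, which is available since $\pi_n$ is a $G$-fibration. If one prefers to avoid this upgrade, one can instead retain the $G$-homotopy $\pi_n \circ s_i \simeq_G i_{U_i}$ and concatenate it with the path-based formula for $F_i$, which complicates the expressions but leaves the structural argument intact.
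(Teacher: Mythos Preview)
Your argument is correct. The approach, however, differs from the paper's. The paper pulls back the $G$-fibration $\pi_n$ along the $H$-map $f\colon X^{n-1}\to X^n$, $f(x_1,\dots,x_{n-1})=(a,x_1,\dots,x_{n-1})$, obtaining $q\colon Y\to X^{n-1}$ with $Y=\{\gamma\in X^I\mid\gamma(0)=a\}$; it then invokes the general facts that (i) pullback does not increase equivariant sectional category and (ii) $\sct_H(q)\geq \ct_H(X^{n-1})$ because the total space $Y$ is $H$-contractible, and finally passes from $\TC_{H,n}(X)$ to $\TC_{G,n}(X)$ via the subgroup inequality. Your route is more elementary and self-contained: you bypass the abstract sectional-category lemmas by writing down the contracting $H$-homotopy $F_i$ on each $V_i$ explicitly, using the path $\gamma_x=s_i(\iota(x))$ emanating from the fixed point $a$. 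What you gain is transparency and independence from the cited machinery; what the paper's argument gains is modularity---the same pullback-plus-contractible-total-space pattern is reused elsewhere (e.g.\ in the parametrized analogue later in the paper), and it makes the intermediate inequality $\ct_H(X^{n-1})\leq \TC_{H,n}(X)$ visible as a separate step.
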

\begin{proof}
The proof is similar to that of \cite[Corollary 5.8]{EqTC}. 
Define a map $f:X^{n-1}\to X^n$ by $f(x_1,\dots,x_{n-1})=(a,x_1,\dots,x_{n-1})$.
Then note that $f$ is $H$-equivariant map. 
 Now consider the following pullback diagram 
\[ \begin{tikzcd}
Y \arrow{r}{} \arrow[swap]{d}{q} & X^I \arrow{d}{\pi_n} \\%
X^{n-1} \arrow{r}{f}&X^n\\%
& 
\end{tikzcd}
,\] where $Y=\{\gamma\in X^I 
\mid \gamma(0)=a\}$
Note that $Y$ admits $H$-action. In fact, it is $H$-contractible. Then from \cite[Proposition 4.6]{EqTC}  and 
from \cite[Proposition 4.3]{EqTC}, we get \[\ct_H(X^{n-1})\leq \sct_{H}(q)\leq \sct_H(q)\leq \sct_H(\pi_n)=\TC_{H,n}(X).\]
Now from \cite[Proposition 3.14]{byehsarkareqtcn} we have $\TC_{H,n}(X) \leq \TC_{G,n}(X)$. This proves the desired inequality.
\end{proof}

The following result is also missed out in \cite{byehsarkareqtcn}, which is a sequential analogue of \cite[Proposition 5.12]{EqTC}. 
\begin{proposition}\label{prop: eqseqtc-tpgroup}
    Let $A$ be a topological group and $G$ acts on $A$ via topological group homomorphisms such that $A$ is $G$-connected. Then $\TC_{G,n}(A)=\ct_G(A^{n-1})$.
\end{proposition}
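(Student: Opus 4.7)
The plan is to prove the two inequalities $\ct_G(A^{n-1}) \leq \TC_{G,n}(A)$ and $\TC_{G,n}(A) \leq \ct_G(A^{n-1})$ separately, mirroring the non-equivariant identity between $\TC_n$ of a topological group and $\ct$ of its $(n-1)$-fold power, while tracking $G$-equivariance throughout.

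For the lower bound, I apply \Cref{prop:eqcat-htc-ineq} with $a = e$, the identity element of $A$. Since $G$ acts by topological group homomorphisms, every $g \in G$ sends $e$ to $e$, so the stabilizer of $e$ is all of $G$, and the proposition specializes to $\ct_G(A^{n-1}) \leq \TC_{G,n}(A)$.

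For the upper bound the strategy is to convert equivariant nullhomotopies in $A^{n-1}$ into equivariant sequential motion planners on $A^n$ using the group operation. Suppose $\ct_G(A^{n-1}) = k$. Because $(e,\dots,e)$ is a $G$-fixed point and $A^{n-1}$ is $G$-connected, the remark following \Cref{def:eqsecat} (citing \cite[Corollary 4.7]{EqTC}) lets me choose a $G$-invariant open cover $W_1,\dots,W_k$ of $A^{n-1}$ and $G$-equivariant homotopies $H_i\colon W_i \times I \to A^{n-1}$ with $H_i(w,0) = w$ and $H_i(w,1) = (e,\dots,e)$; write $H_i = (h_{i,1},\dots,h_{i,n-1})$. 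The map $\psi\colon A^n \to A^{n-1}$ given by $\psi(a_1,\dots,a_n) = (a_1^{-1}a_2,\dots,a_1^{-1}a_n)$ is $G$-equivariant thanks to the group-homomorphism hypothesis, so the preimages $U_i := \psi^{-1}(W_i)$ form a $G$-invariant open cover of $A^n$. Over each $U_i$ I build a strict $G$-equivariant section $s_i$ of $\pi_n$ as follows: given $(a_1,\dots,a_n) \in U_i$, let $w := \psi(a_1,\dots,a_n)$, form a path $\tilde\gamma_w$ from $e$ through $w_1,\dots,w_{n-1}$ at the times $0,\tfrac{1}{n-1},\dots,1$ by concatenating on each subinterval the path $h_{i,j-1}(w,\cdot)$ from $w_{j-1}$ to $e$ (with $w_0 := e$) followed by the reverse of $h_{i,j}(w,\cdot)$ from $e$ to $w_j$, and then put $s_i(a_1,\dots,a_n)(t) := a_1 \cdot \tilde\gamma_w(t)$.

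Two routine verifications complete the argument: first, $\pi_n \circ s_i = i_{U_i}$ because $s_i(a_1,\dots,a_n)(j/(n-1)) = a_1 \cdot w_j = a_{j+1}$; second, $G$-equivariance of $s_i$ follows from $\psi(g\cdot a) = g\cdot\psi(a)$, the $G$-equivariance of $H_i$, and the identity $g\cdot(bc) = (g\cdot b)(g\cdot c)$ supplied by the group-homomorphism assumption. These together give $\TC_{G,n}(A) \leq k = \ct_G(A^{n-1})$. I expect the main (and really only) obstacle to be the bookkeeping for the piecewise construction of $\tilde\gamma_w$: one must check that consecutive pieces agree at the junction points (where both evaluate to $e$) and that $\tilde\gamma_w$ depends continuously on $w$, but no deeper homotopical difficulty should arise.
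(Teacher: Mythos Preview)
Your proof is correct and follows essentially the same strategy as the paper's: the lower bound via \Cref{prop:eqcat-htc-ineq} at the fixed point $e$, and the upper bound by pulling back a $G$-categorical cover of $A^{n-1}$ along a ``difference'' map and building sections by concatenating the contracting homotopies and translating back. The only cosmetic difference is that you normalize by left-multiplying with $a_1^{-1}$ and then left-translate the resulting path by $a_1$, whereas the paper normalizes by right-multiplying with $x_n^{-1}$ and right-translates by $y_n$; this is an inessential symmetry of the argument.
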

\begin{proof}
 Since $G$ acts on $A$ via topological group homomorphism and on $A^{n-1}$ diagonally, the identity element $\bar{e}=(e,\dots,e)\in A^{n-1}$ becomes a fixed point of $G$-action on $A^{n-1}$. Therefore, from \Cref{prop:eqcat-htc-ineq}, we get $\ct_G(A^{n-1})\leq \TC_{G,n}(A)$. 
 
 We now prove the other inequality. Suppose $\ct_G(A^{n-1})=r$. Then there exist a cover of $r$-many $G$-categorical subsets of of $A^{n-1}$. Let such cover be $\{U_1,\dots,U_r\}$. Let $u\in U_i$. Then from \cite[Lemma 3.14]{EqTC} we get a $G$-homotopy $H_i:O(u)\times I\to A^{n-1}$ such that $H_i(x,0)=x$ and $H_i(x,1)\in O(\bar{e})=\{\bar{e}\}$ for each $1\leq i\leq r$. Note that since $U_i$'s are $G$-invariant, we have $U_i=\cup_{u\in U_i} O(u)$. Moreover, since orbits are either disjoint or coincide, we get homotopy $\tilde{H}_i:U_i\times I\to A^{n-1}$ such that $\tilde{H}_i(x,0)=x$ and $\tilde{H}_i(x,1)\in O(\bar{e})=\{\bar{e}\}$ for all $x\in U_i$ and $t\in [0,1]$ and for $1\leq i\leq r$.
 For each $1\leq i\leq r$, we define open $G$-invariant set $V_i=\{(x_1\dots,x_n)\in A^n \mid (x_1x_n^{-1},\dots,x_{n-1}x_n^{-1})\in U_i\}$ of $A^{n}$. Observe that each $V_i$ is open invariant subset of $A^{n-1}$ and the collection $\{V_i \mid 1\leq i\leq r\}$ forms a cover of $A^{n}$. 

Our task is now to define a $G$-section of $\pi_n:A^I\to A^n$ on each $V_i$ for $1\leq i\leq r$. Let $\bar{x}=(x_1,\dots,x_{n-1})\in U_i$. Then note that \[\tilde{H}_i(\bar{x},t)=(\gamma^i_{1\bar{x}}(t),\dots, \gamma^i_{k\bar{x}}(t)),\] where 
$\gamma^i_{j\bar{x}}(0)=x_i$ and  $\gamma^i_{j\bar{x}}(1)=e$.

Let $\bar{y}=(y_1,\dots,y_n)\in V_i$ and $\tilde{y}=(y_1y_n^{-1},\dots, y_{n-1}y_n^{-1})$. Then
 define $s_i:V_i\to A^I$ as follows
\begin{equation}\label{eq: section on Uij}
 s_i(\bar{y})(t):=\begin{cases}  
\gamma^i_{1\tilde{y}}*\bar{\gamma}^i_{2\tilde{y}}((n-1)t)y_n & t\in [0,\frac{1}{n-1}]\\
\gamma^i_{2\tilde{y}}*\bar{\gamma}^i_{3\tilde{y}}((n-1)t-1)y_n & t\in [\frac{1}{n-1},\frac{2}{n-1}]\\ 
\hspace{1cm}.&.\\
\hspace{1cm}.&.\\
\hspace{1cm}.&.\\
\gamma^i_{r\tilde{y}}*\bar{\gamma}^i_{r+1\tilde{y}}((n-1)t-r+1)y_n & t\in [\frac{r-1}{n-1},\frac{r}{n-1}]\\
\hspace{1cm}.&.\\
\hspace{1cm}.&.\\
\hspace{1cm}.&.\\
\gamma^i_{k-1\tilde{y}}*\bar{\gamma}^i_{n\tilde{y}}((n-1)t-(n-2))y_n& t\in [\frac{n-2}{n-1},1],
\end{cases}   
\end{equation}
where $*$ denotes the concatenation of paths.
One can see that $s_i$ is indeed a $G$-section of $\pi_n$.
This gives us $\TC_{G,n}(A)\leq \ct_G(A^{n-1})$.
\end{proof}

\section{Equivariant parametrized topological complexity}\label{sec:eqptc}
This section introduces the concept of sequential equivariant parametrized topological complexity and examines its properties. We define fibrewise $G$-homotopy equivalent fibrations and prove one of the important results of this section which states that the sequential $G$-equivariant parametrized topological complexity of
fibrewise $G$-homotopy equivalent fibrations coincides. Then, we obtain various bounds on sequential equivariant parametrized topological complexity, which generalize bounds on the equivariant topological complexity obtained by Colman and Grant in \cite{EqTC}.

Let $p:E\to B$ be a $G$-fibration, and let $E^n_B$ be the corresponding fibre product. Define a $G$-action on $E^n_B$ as $g\cdot(e_1,\dots,e_n)=(ge_1,\dots,ge_n)$. Since $p$ is $G$-map, we have $p(ge_i)=gp(g_i)=gp(e_j)=p(ge_j)$. Thus, this action is well-defined.
The $G$-action on $E^I_B$ is defined as follows. Let $\gamma\in E^I_B$. Then define $(g\cdot \gamma)(t)=g\gamma(t)$. Note that $p(g\cdot \gamma)=gp(\gamma)$ and since $p(\gamma)$ is constant, $p(g\cdot \gamma)$ is also constant. Therefore, $E^I_B$ also admits a $G$-action.
Observe that for $g\in G$ we have:
\begin{align*}
 \Pi_n(g\gamma)&= \bigg(g\gamma(0), g\gamma(\frac{1}{n-1}),\dots,g\gamma(\frac{n-2}{n-1}),g\gamma(1)\bigg)\\
&= g\cdot\bigg(\gamma(0), \gamma(\frac{1}{n-1}),\dots,\gamma(\frac{n-2}{n-1}),\gamma(1)\bigg) =g\cdot \Pi_n(\gamma).
\end{align*}
Thus, $\Pi_n$ is $G$-equivariant.

\begin{definition}\label{def: eqptc}
The sequential equivariant parametrized topological complexity of a $G$-fibration $p:E\to B$, denoted  by $\TC_{G,n}[p:E\to B]$, is defined as 
\[\TC_{G,n}[p:E\to B]:=\sct_G(\Pi_n:E^I_B\to E^n_B).\]    
\end{definition}

We recall that Colman and Grant observed in \cite{EqTC} that the fibration $\pi_n$ is a $G$-fibration (also it follows from \cite[Proposition 2.5]{Grantsymmtc}). This can be proved using an equivariant analogue of \cite[Theorem 2.8.2]{SpanierAT}.
Cohen, Farber and Weinberger \cite[Appendix]{PTC} have shown that the map $\Pi_n$ is a fibration.
We want to prove an equivariant analogue of their result.

In \cite{eqfibrations}, an equivariant analogue of lifting function was defined and equivariant analogue of \cite[Theorem 2.7.8]{SpanierAT} was proved (see  \cite[ Section 2.1 and Proposition 2.1.3]{eqfibrations}). We also refer the reader to \cite[Section 2]{Grantsymmtc}, \cite{eqliftingfn} and \cite{Wanereqfib}. 

Let $p:E\to B$ be a $G$-fibration. Let $X$ be a topological space with a trivial $G$-action. Consider the space
$E^X_B=\{f:X\to E \mid p\circ f ~\text{is constant}\}$ with the compact open topology, on which the $G$-action is defined as $(gf)(x)=g(f(x))$.
Let $(K,L)$ be a pair of CW-complexes with trivial $G$-action, and  $\Pi:E^K_B\to E^L_B$ be the restriction map defined by $\Pi(f)=f|_{L}$. Note that $\Pi$ is a $G$ map. 
\begin{proposition}\label{prop:Pi-g-fibration}
The map $\Pi$ is a $G$-fibration.    
\end{proposition}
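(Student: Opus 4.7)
The plan is to verify that $\Pi$ admits a $G$-equivariant lifting function; by the equivariant analogue of \cite[Theorem 2.7.8]{SpanierAT} established as \cite[Proposition 2.1.3]{eqfibrations}, this is equivalent to $\Pi$ being a $G$-fibration. The construction combines a $G$-equivariant lifting function for $p$ with a deformation retraction of $K \times I$ onto the subspace $A := (K \times \{0\}) \cup (L \times I)$.

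Since $p$ is a $G$-fibration, it admits a $G$-equivariant lifting function $\lambda: E \times_B B^I \to E^I$, which after standard regularization we may assume takes constant paths in $B$ to constant paths in $E$. Since $(K, L)$ is a CW pair, $L \hookrightarrow K$ is a cofibration, so $A$ is a strong deformation retract of $K \times I$; fix a retraction $r: K \times I \to A$ and a deformation $\rho: (K \times I) \times I \to K \times I$ with $\rho(-, -, 0) = \mathrm{id}$, $\rho(-, -, 1) = r$, and $\rho(a, s) = a$ for all $a \in A$ and $s \in I$. Because $G$ acts trivially on $K$ and $I$, both $r$ and $\rho$ are automatically $G$-equivariant.

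The input to a lifting function for $\Pi$ is a pair $(f, \alpha)$ with $f \in E^K_B$ and $\alpha \in (E^L_B)^I$ satisfying $\alpha(0) = f|_L$. Assemble the adjoint $\Psi: A \to E$ by $\Psi(k, 0) = f(k)$ and $\Psi(l, t) = \alpha(t)(l)$; the two formulas agree on $L \times \{0\}$. The composite $p \circ \Psi$ depends only on the $I$-coordinate of its argument, hence defines a path $\beta: I \to B$ with $\beta(0) = p \circ f$. Now set
\[
\Phi(k, t) := \lambda\bigl(\Psi(r(k, t)),\, s \mapsto \beta(\pi_I(\rho(k, t, 1-s)))\bigr)(1),
\]
where $\pi_I: K \times I \to I$ is the projection. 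The inner path runs from $\beta(\pi_I(r(k, t))) = p(\Psi(r(k, t)))$ at $s = 0$ to $\beta(t)$ at $s = 1$, so $\lambda$ applies and yields $p(\Phi(k, t)) = \beta(t)$ independently of $k$. Hence $\Phi$ adjoints to a well-defined path $I \to E^K_B$, and this assignment is the sought lifting function for $\Pi$; its $G$-equivariance in $(f, \alpha)$ follows from that of $\lambda$, $r$, and $\rho$.

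The main obstacle is to confirm that $\Phi$ restricts to $\Psi$ on $A$, which is what makes the output path start at $f$ and have $\Pi$-image equal to $\alpha$. For $(k, t) \in A$ the stationarity $\rho(k, t, s) = (k, t)$ collapses the inner path to the constant path at $\beta(t)$, whence the regularity of $\lambda$ forces $\Phi(k, t) = \Psi(k, t)$. Continuity of $\Phi$ in $(f, \alpha)$ and in $(k, t)$ follows from that of its constituents, and verifying that the adjoint map $(f, \alpha) \mapsto (t \mapsto \Phi(-, t))$ is then continuous into $E^K_B$ is routine, completing the argument.
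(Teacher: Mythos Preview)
Your argument is correct and follows essentially the same strategy as the paper: both construct a $G$-equivariant lifting function for $\Pi$ from a $G$-lifting function $\lambda$ for $p$ together with a retraction of $K\times I$ onto $A=(K\times\{0\})\cup(L\times I)$. The only real difference is how the boundary conditions on $A$ are secured. You assume $\lambda$ is \emph{regular} and use a \emph{strong} deformation retraction rel~$A$, so that on $A$ the inner path collapses and regularity forces $\Phi|_A=\Psi$; the paper instead works with an arbitrary $G$-lifting function and builds an explicit interpolation $\alpha_h$ between the given homotopy $h$ and the naive lift $\tilde\Lambda$, then glues $\tilde\Lambda\cup\alpha_h$ over the retraction. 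Your route is cleaner, but the step ``after standard regularization we may assume $\lambda$ takes constant paths to constant paths'' deserves a word of justification in the equivariant setting (the usual regularization must be checked to be $G$-equivariant); the paper's explicit construction avoids this point entirely.
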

\begin{proof}
Since $p:E\to B$ is a $G$-fibration, it follow that (see \cite[Proposition 2.1.3]{eqfibrations} or \cite[Lemma 2.2]{Grantsymmtc}), there exist a $G$-lifting function $\lambda: \bar{B}\to E^I$, where $\bar{B}=\{(e,\gamma)\in E\times B^I \mid p(e)=\gamma(0)\}$, such that $p\circ \lambda(e,\gamma)=\gamma$ and $\lambda(e,\gamma)(0)=e$.
Consider the trivial action of $G$ on $L\times I$. An element $h$ of $(E^L_B)^I$ can be considered as a map $h:L\times I\to E$ such that for any $t\in I$, the image $h(L\times \{t\})\subseteq E$ lies in a single fibre of $p$.
Note that, we need to construct a $G$-lifting function $\Lambda: \bar{E}^L_B\to (E^K_B)^I$ for $\Pi$, where
\[\bar{E}^L_B=\{(f,h)\in E^K_B\times (E^L_B)^I \mid f(x)=h(x,0) ~\text{for all}~ x\in L\}.\] That is, $\Lambda$ should satisfy the conditions: $\Lambda(f,h)(x,t)=h(x,t)$ for all $x\in L$ and $t\in I$, and $\Lambda(f,h)(x,0)=f(x)$ for $x\in K$. 
Note that $\bar{E}^L_B$ has the diagonal $G$-action.

Let $h\in (E^L_B)^I$. Define a path $\omega_h$ in $B$ by  $\omega_h(t)=p\circ h(x,t)$ for $x\in L$.
Now define $\tilde{\Lambda}:\bar{E}^L_B\to (E^K_B)^I$ by $\tilde{\Lambda}(f,h)(x,t)=\lambda(f(x), \omega_h)(t)$. Note that for $g\in G$, we have 
$\tilde{\Lambda}(gf, gh)(x, t) =\lambda (gf(x), \omega_{gh})(t) = \lambda(gf(x), g\omega_h)(t) = g\lambda(f(x), \omega_h)(t)$.
Thus, $\tilde{\Lambda}$ is a $G$-map.
Observe that,  $\tilde{\Lambda}(f,h)(x,0)=f(x)$ for $x\in K$.  Thus $\tilde{\Lambda}$ satisfies the second condition of a $G$-lifting function. However, the condition $\tilde{\Lambda}(f,h)(x,t)=h(x,t)$ for all $x\in L$ is not guaranteed. Thus, we need a modification to $\tilde{\Lambda}$.

Define $\alpha_h:L\times I\times I\to E$ by 
\[    \alpha_h(x,\tau,t)=\begin{cases}
    h(x,t),& 0\leq t\leq \tau,\\
    \lambda(h(x,\tau), \omega_h^{[\tau,1]}(\frac{t-\tau}{1-\tau})),& \tau\leq t\leq 1,
\end{cases}
\]
where $\omega_h^{[\tau,1]}$ is a path defined by $\omega_h^{[\tau,1]}(s)=\omega_h(\tau+(1-\tau)s)$ for $s\in I$. Note that for $x\in L$, we have $p\circ \alpha_h(x,\tau,t)=\omega_h(t)$, $\alpha_h(x,0,t)=\lambda(h(x,0),\omega_h)(t)=\tilde{\Lambda}(f,h)(x,t)$, and $\alpha_h(x,1,t)=h(x,t)$. 

We will now define \[\Lambda: \bar{E^L_B}\to (E^K_B)^I ~\text{by}~ \Lambda(f,h)(x,t)=(\tilde{\Lambda}(f,h)\cup \alpha_h)(\rho(x,1),t),\] where $\rho:K\times I\to K\times \{0\}\cup L\times I$ is a retraction. Here we have treated $\tilde{\Lambda}(f,g)$ as a function from $K\times \{0\}\times I\to E$. Since for $g\in G$, we have $g\alpha_h=\alpha_{gh}$ and $\tilde{\Lambda}$ is a $G$-map, we conclude that $\Lambda$ is a $G$-map. One can check that $\Lambda(f,h)(x,t)=h(x,t)$ for all $x\in L$ and $\Lambda(f,h)(x,0)=f(x)$ for $x\in K$. Thus, $\Lambda$ is a $G$-lifting function for $\Pi$.   
\end{proof}

\begin{corollary}\label{cor:Pin-G-fib}
The $G$-map $\Pi_n:E^I_B\to E^n_B$ is a $G$-fibration.
\end{corollary}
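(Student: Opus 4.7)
The plan is to deduce the corollary as an immediate specialization of Proposition \ref{prop:Pi-g-fibration}. To do this, I would choose $K = I$, equipped with its standard CW-structure refined so that the points $0, \tfrac{1}{n-1}, \tfrac{2}{n-1}, \ldots, \tfrac{n-2}{n-1}, 1$ are all $0$-cells, and I would set $L$ to be the discrete subcomplex consisting of exactly these $n$ points. Both $K$ and $L$ carry the trivial $G$-action, so $(K,L)$ is a CW-pair of the kind required by the proposition.

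Next, I would verify the identifications $E^K_B = E^I_B$ (tautologically, since $K = I$) and $E^L_B \cong E^n_B$ via the obvious homeomorphism $f \mapsto \bigl(f(0), f(\tfrac{1}{n-1}), \ldots, f(1)\bigr)$; the condition that $p \circ f$ be constant on the finite set $L$ is precisely the condition that the image tuple lies in a common fibre of $p$. Under this homeomorphism, the restriction map $\Pi : E^K_B \to E^L_B$ of Proposition \ref{prop:Pi-g-fibration} sends $\gamma$ to $\gamma|_L = \bigl(\gamma(0), \gamma(\tfrac{1}{n-1}), \ldots, \gamma(1)\bigr)$, which is exactly $\Pi_n$ as defined in \eqref{eq: Pin}. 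Moreover, this homeomorphism is $G$-equivariant with respect to the diagonal $G$-action on $E^n_B$ and the pointwise $G$-action on $E^L_B$, because $(gf)(x) = g\cdot f(x)$ for each $x \in L$.

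Having made these identifications, the conclusion is immediate: Proposition \ref{prop:Pi-g-fibration} tells us that $\Pi$ is a $G$-fibration, and under the homeomorphism $E^L_B \cong E^n_B$ this $G$-fibration is $\Pi_n$. Since being a $G$-fibration is invariant under $G$-homeomorphism of the base (and the total space is unchanged here), $\Pi_n : E^I_B \to E^n_B$ is a $G$-fibration. There is no genuine obstacle in this argument; the only thing to be careful about is recording the CW-structure on $I$ so that $(I, L)$ literally satisfies the hypothesis of the proposition and checking that the identification $E^L_B \cong E^n_B$ respects the $G$-actions, both of which are routine.
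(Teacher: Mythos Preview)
Your argument is correct and follows exactly the same approach as the paper: apply Proposition~\ref{prop:Pi-g-fibration} with $K=I$ carrying a CW-structure whose $0$-cells include $\{0,\tfrac{1}{n-1},\dots,\tfrac{n-2}{n-1},1\}$ and $L$ equal to this finite subcomplex. The paper states this in a single sentence, whereas you have also spelled out the identification $E^L_B\cong E^n_B$ and its $G$-equivariance, which is a welcome bit of care but not a different idea.
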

\begin{proof}
The proof follows from \Cref{prop:Pi-g-fibration}, by considering $K=I$ as a CW-complex with its subcomplex $L=\{0,\frac{1}{n-1},\dots,\frac{n-2}{n-1},1\}$.    
\end{proof}

The following proposition is a straightforward application of  \Cref{def: eqptc}.
\begin{proposition} \ 
\begin{enumerate}
\item Let $p:E\to B$ be a fibration, and let $G$ act trivially on $E$ and $B$. Then $\TC_{G,n}[p:E\to B]=\TC_n[p:E\to B]$. 

\item If $B= \{\ast\}$, then $\TC_{G,n}[p:E\to B]=\TC_{G,n}(E)$.
\item Suppose  a $G$-fibration $p:E\to B$ is trivial. Then $\TC_{G,n}[p:E\to B]=\TC_{G,n}(F)$. 
\end{enumerate}
\end{proposition}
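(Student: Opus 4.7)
The plan is to handle the three parts separately, since each is essentially an unpacking of \Cref{def: eqptc} together with a description of $\Pi_n$ in the special situation at hand. None of the parts should require any machinery beyond the definitions and the formula \eqref{eq: Pin}.

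For part (1), with $G$ acting trivially on $E$ and $B$, I would observe that the induced diagonal action on $E^n_B$ and the induced pointwise action on $E^I_B$ are both trivial. Hence every open subset of $E^n_B$ is automatically $G$-invariant and every continuous section of $\Pi_n$ is automatically $G$-equivariant, so the cover-and-section condition defining $\sct_G(\Pi_n)$ reduces verbatim to the one defining $\sct(\Pi_n)$. For part (2), when $B$ is a single point, the fibre-product constraint $p(e_i)=p(e_j)$ is vacuous and every path in $E$ is automatically mapped to a constant path in $B$, so $E^n_B = E^n$ and $E^I_B = E^I$. Under these identifications $\Pi_n$ coincides with the fibration $\pi_n$ of \eqref{eq:general-free-path-fib} carrying the same $G$-action, so $\TC_{G,n}[p:E\to B] = \sct_G(\pi_n) = \TC_{G,n}(E)$ by definition.

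For part (3), I would interpret a trivial $G$-fibration as one $G$-fibrewise equivalent to the projection $\mathrm{pr}_B : B\times F \to B$, with $G$ acting trivially on $B$ and via its given action on $F$. The natural maps $E^n_B \cong B\times F^n$ and $E^I_B \cong B\times F^I$ then identify $\Pi_n$ with $\mathrm{id}_B \times \pi^F_n$, where $\pi^F_n\colon F^I\to F^n$ is the generalised free path space fibration of $F$. The inequality $\sct_G(\mathrm{id}_B \times \pi^F_n) \leq \TC_{G,n}(F)$ is immediate: a $G$-invariant cover $\{V_j\}$ of $F^n$ with $G$-sections $\sigma_j$ of $\pi^F_n$ lifts to the cover $\{B\times V_j\}$ of $B\times F^n$ with $G$-sections $(b,y)\mapsto (b, \sigma_j(y))$. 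For the reverse inequality, since $G$ acts trivially on $B$, any point $b_0\in B$ is $G$-fixed; then a $G$-invariant cover $\{U_i\}$ of $B\times F^n$ with $G$-sections $s_i$ restricts to the $G$-invariant cover $\{V_i := \{y\in F^n : (b_0, y)\in U_i\}\}$ of $F^n$, and the restrictions of the $s_i$ to these slices yield the required $G$-sections of $\pi^F_n$. The reduction from an arbitrary trivial $G$-fibration to this product model is provided, if needed, by the fibrewise $G$-homotopy invariance \Cref{prop:fib-G-htp-inv}.

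The only real obstacle is the ambiguity of the phrase \emph{trivial $G$-fibration} in part (3): if one allowed a non-trivial $G$-action on the base $B$, the restriction argument above would force a choice of $G$-fixed point in $B$, which need not exist in general. My plan therefore adopts the natural reading in which $G$ acts trivially on the base so that the entire $G$-action is carried by the fibre $F$; this is also the reading consistent with the stated conclusion $\TC_{G,n}(F)$ as opposed to $\TC_{G_{b_0}, n}(F)$ for a stabiliser subgroup.
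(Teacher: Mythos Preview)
Your proof is correct and follows the same route as the paper: parts (1) and (2) are immediate from the definitions, and part (3) proceeds by identifying $\Pi_n$ with $\mathrm{id}_B\times\pi^F_n$ (the paper writes it as $\pi_n\times\mathrm{id}$) and then arguing that its equivariant sectional category equals $\sct_G(\pi^F_n)=\TC_{G,n}(F)$. The one difference is in scope: the paper takes both $F$ and $B$ to be $G$-spaces and simply asserts the last equality is ``easy to see'', whereas you restrict to trivial $G$-action on $B$ and supply the explicit slice-at-a-fixed-point argument---your caution here is reasonable, since without a $G$-fixed basepoint in $B$ the reverse inequality is not obvious.
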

\begin{proof}
Parts (1) and (2) are obvious. We prove part (3) here.  Let $F$ and $B$ be $G$-spaces and let $p: F\times B (=E)\to B$ be a trivial $G$-fibration. Then we have $E^n_B=F^n\times B$, $E^I_B=F^I\times B$ and  $\Pi_n: E^I_B\to E^n_B$ becomes $\pi_n\times id: F^I\times B\to F^n\times B$. It is easy to see that $\sct_G(\pi_n\times id)=\sct_G(\pi_n)$.  Thus we have \begin{equation}\label{eq: ptc-equals-tc-fibre}
  \TC_{G,n}[p:F\times B\to B]=\TC_{G,n}(F)  .
\end{equation}
\end{proof}

\begin{remark}
Note that if $G$ acts trivially on $F$ and $B$, then the conclusion of \eqref{eq: ptc-equals-tc-fibre} follows from \cite[Example 3.2]{SequentialPTC} and the $n=2$ case from \cite[Example 4.2]{PTC}.
\end{remark}

In the following result, we establish an equivariant analogue of  \cite[Lemma 2.4]{daundkar2024sequential}. We also show that if the fixed point set of base space of a $G$-fibration is non-empty, then the sequential equivariant topological complexity of fibre is dominated by the sequential equivariant parametrized topological complexity.

\begin{proposition}
Let $p:E\to B$ be a $G$-fibration and let $\tilde{p}: \tilde{E}\to \tilde{B}$ be a pullback fibration corresponding to a $G$-map $\tilde{B}\to B$.
Then, 
\begin{equation}\label{eq:ptc-pullback-ineq}
 \TC_{G,n}[\tilde{p}:\tilde{E}\to \tilde{B} ]\leq \TC_{G,n}[p:E\to B].   
\end{equation}
Moreover, if $E^G\neq \emptyset$, then $F$ admits $G$-action and 
\begin{equation}\label{eq:ptc-fib-ineq}
\TC_{G,n}(F)\leq \TC_{G,n}[p:E\to B].    
\end{equation}
\end{proposition}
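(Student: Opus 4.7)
\smallskip

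\noindent\textbf{Proof proposal.} For part (1), the plan is to mimic the non-equivariant pullback argument from \cite{SequentialPTC}, taking care that every map in sight is $G$-equivariant. Let $f : \tilde B \to B$ be the given $G$-map and write $\tilde E = \{(\tilde b, e)\in \tilde B\times E \mid f(\tilde b)=p(e)\}$ with the diagonal $G$-action and projection $\tilde p(\tilde b, e)=\tilde b$. The second projection $\tilde f : \tilde E \to E$ is a $G$-map over $f$. From it I would build induced $G$-maps $\tilde f^I : \tilde E^I_{\tilde B}\to E^I_B$ and $\tilde f^n : \tilde E^n_{\tilde B}\to E^n_B$ by sending a fibrewise path $\tilde\gamma=(\tilde b, \gamma)$ to $\gamma$ and a fibre product $n$-tuple $((\tilde b, e_1),\ldots,(\tilde b, e_n))$ to $(e_1,\ldots,e_n)$; equivariance is immediate, and a direct check shows $\tilde f^n \circ \tilde\Pi_n = \Pi_n\circ \tilde f^I$, so the square is a pullback of $G$-spaces. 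Now given a $G$-invariant open cover $\{U_1,\ldots,U_k\}$ of $E^n_B$ with $G$-sections $s_i : U_i \to E^I_B$, the preimages $\tilde U_i := (\tilde f^n)^{-1}(U_i)$ form a $G$-invariant open cover of $\tilde E^n_{\tilde B}$, and the pullback property yields $G$-sections $\tilde s_i : \tilde U_i \to \tilde E^I_{\tilde B}$ defined by $\tilde s_i(\tilde x) = (\tilde b(\tilde x), s_i(\tilde f^n(\tilde x)))$ where $\tilde b(\tilde x)$ is the common $\tilde B$-coordinate of $\tilde x$. This gives the inequality \eqref{eq:ptc-pullback-ineq}.

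For part (2), pick any $e_0 \in E^G$ and set $b_0 := p(e_0)$. Since $p$ is $G$-equivariant, $g\cdot b_0 = g\cdot p(e_0) = p(g\cdot e_0) = p(e_0) = b_0$, so $b_0 \in B^G$. Hence the fibre $F = p^{-1}(b_0)$ is $G$-invariant and inherits a $G$-action, and the inclusion $\iota : \{b_0\}\hookrightarrow B$ (with trivial $G$-action on the point) is a $G$-map. Pulling back $p$ along $\iota$ gives the trivial $G$-fibration $F \to \{b_0\}$. By the preceding proposition (part (2) combined with part (3) in the trivial-base/trivial-bundle case), $\TC_{G,n}[F\to\{b_0\}] = \TC_{G,n}(F)$, so applying \eqref{eq:ptc-pullback-ineq} to the pullback along $\iota$ yields \eqref{eq:ptc-fib-ineq}.

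The only genuinely subtle point, and the one I would write out carefully, is the first step: verifying that the induced $G$-maps $\tilde f^I$ and $\tilde f^n$ really do exhibit $\tilde\Pi_n$ as the pullback of $\Pi_n$ in the category of $G$-spaces, so that local $G$-sections pull back to local $G$-sections without any continuity or equivariance defect. Once this square is in place, both inequalities are formal consequences of the monotonicity of $\sct_G$ under pullback, and part (2) reduces to part (1) via the fixed-point $e_0$.
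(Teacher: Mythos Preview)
Your proposal is correct and follows essentially the same route as the paper: identify $\tilde\Pi_n$ as the pullback of $\Pi_n$ along the induced $G$-map $\tilde E^n_{\tilde B}\to E^n_B$, then use monotonicity of $\sct_G$ under pullback (which you spell out by explicitly pulling back the local $G$-sections, while the paper just invokes it); for part~(2) both you and the paper pull back along the inclusion $\{b_0\}\hookrightarrow B$ of a $G$-fixed point and reduce to $\TC_{G,n}[F\to\{b_0\}]=\TC_{G,n}(F)$.
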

\begin{proof}
We consider the diagram  of a pullback  of $G$-fibrations
\[ \begin{tikzcd}
\tilde{E} \arrow{r}{f'} \arrow[swap]{d}{\tilde{p}} & E \arrow{d}{p} \\%
\tilde{B} \arrow{r}{f}&B\\%
& 
\end{tikzcd}
.\]
Note that $f'$ induces a $G$-map $\tilde{f'}: \tilde{E}^n_{\tilde{B}}\to E^n_B$ defined by $\tilde{f'}(e_1,\dots,e_n)=(f'(e_1),\dots,f'(e_n))$. The map $\tilde{f'}$ is well defined because of the commutativity of the above diagram.
One can observe that the pullback of the $G$-fibration $\Pi_n: E^I_B\to E^n_B$ along $\tilde{f'}$ is isomorphic as a $G$-fibration to $\tilde{\Pi}_n:\tilde{E}^I_{\tilde{B}}\to \tilde{E}^n_{\tilde{B}}$. Thus, \[\TC_{G,n}[\tilde{p}:\tilde{E}\to \tilde{B}] =  \sct_G(\tilde{\Pi}_n)\leq \sct_G(\Pi_n)=\TC_{G,n}[p: E\to B].\]
Since $E^G\neq \emptyset$, we have $B^G\neq \emptyset$.
Then it is easy to see that, for $e\in E^G$, the fibre $F:=p^{-1}(p(e))$ admits a $G$-action. Then \eqref{eq:ptc-fib-ineq} follows from  \eqref{eq:ptc-pullback-ineq} by taking the pullback along the inclusion $\{b\}\hookrightarrow B$.
\end{proof}

In the following result, we show that, under certain conditions, having a continuous $G$-equivariant parametrized motion planning algorithm for a $G$-fibration is equivalent to the fibre being $G$-contractible.
\begin{proposition}\label{prop:cont-pmpa-Gcontr}
Let $p:E\to B$ be a $G$-fibration with  $\TC_{G,n}[p:E\to B]=1$. Suppose there exists $e\in E^G\neq \emptyset$ such that fibre $F:=p^{-1}(p(e))$ is $G$-connected. 
Then $F$ is $G$-contractible. 
Conversely, if $F$ is $G$-contractible and $E^n_B$ is a $G$-CW-complex, then $\TC_{G,n}[p:E\to B]=1$.
\end{proposition}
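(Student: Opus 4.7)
Assume $\TC_{G,n}[p:E\to B]=1$ and let $s\colon E^n_B\to E^I_B$ be a global $G$-equivariant section of $\Pi_n$. Since $p(e)\in B^G$, the fibre $F=p^{-1}(p(e))$ is $G$-invariant. I would introduce the map
\[\iota\colon F\longrightarrow E^n_B,\qquad \iota(f)=(e,e,\ldots,e,f),\]
which is well defined because $p(f)=p(e)$, and which is $G$-equivariant because $e$ is $G$-fixed and $E^n_B$ carries the diagonal action. The composite $s\circ\iota\colon F\to E^I_B$ is then a $G$-map. By definition of $\Pi_n$ the path $\gamma_f:=s(\iota(f))$ satisfies $\gamma_f(0)=e$ and $\gamma_f(1)=f$, and its image lies in a single fibre of $p$, which must be $F$ itself. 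Setting $H(f,t):=\gamma_f(t)$ thus defines a $G$-homotopy $H\colon F\times I\to F$ from the constant map at $e$ to $\mathrm{id}_F$, exhibiting $F$ as $G$-contractible to $e$.

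\textbf{Converse direction.} Assume $F$ is $G$-contractible via a $G$-nullhomotopy to some $e_0\in F^G$, and that $E^n_B$ is a $G$-CW complex. The plan is to appeal to an equivariant Dold-type theorem: a $G$-fibration $\Pi\colon X\to Y$ whose $H$-fixed-point restrictions $\Pi^H\colon X^H\to Y^H$ are all weak equivalences admits a $G$-section whenever $Y$ is a $G$-CW complex. For each closed subgroup $H\le G$, one checks that $(E^I_B)^H$ is the space of paths in $E^H$ lying in single fibres of the restricted fibration $p^H\colon E^H\to B^H$, and $(E^n_B)^H$ is the corresponding fibre product, so $\Pi_n^H$ is itself the sequential path-space fibration attached to $p^H$, with fibres consisting of paths in $F^H$ through prescribed sample points. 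Since a $G$-nullhomotopy of $F$ restricts to a nullhomotopy of every $F^H$, each $F^H$ is contractible, hence so are the fibres of $\Pi_n^H$; the classical Dold theorem then makes each $\Pi_n^H$ a weak equivalence, and the equivariant Dold theorem produces the desired global $G$-section of $\Pi_n$, giving $\TC_{G,n}[p:E\to B]=1$.

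\textbf{Main obstacle.} The forward direction is essentially formal: the $G$-section yields an explicit $G$-nullhomotopy through the map $\iota$. The genuine subtlety is in the converse, which relies on an equivariant enhancement of Dold's theorem. The key observation that makes this go through is that $G$-contractibility of $F$ automatically implies contractibility of every fixed-point subspace $F^H$, so that the hypotheses can be verified subgroup-by-subgroup over the $G$-CW complex $E^n_B$; this is precisely where the $G$-CW assumption on the fibre product enters the argument.
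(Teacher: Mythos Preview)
Your forward direction is correct but takes a different route from the paper. The paper argues indirectly: from $\TC_{G,n}[p:E\to B]=1$ it first deduces $\TC_{G,n}(F)=1$ via the inequality $\TC_{G,n}(F)\le \TC_{G,n}[p:E\to B]$ (this is \eqref{eq:ptc-fib-ineq}, which uses $E^G\neq\emptyset$), then applies \Cref{prop:eqcat-htc-ineq} with $H=G$ to obtain $\ct_G(F^{n-1})=1$, and finally observes that this forces $\ct_G(F)=1$. Your explicit construction of a $G$-contraction from the global section is more elementary and self-contained; it avoids invoking those earlier results altogether. One small point worth making explicit: $\sct_G(\Pi_n)=1$ a priori yields only a $G$-\emph{homotopy} section, but since $\Pi_n$ is a $G$-fibration (\Cref{cor:Pin-G-fib}) you can lift it to a genuine $G$-section, which is what your argument uses.

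For the converse, your approach and the paper's are essentially the same idea in different language. The paper observes that the fibre of $\Pi_n$ is $(\Omega F)^{n-1}$, notes it is $G$-contractible because $F$ is, and then invokes equivariant obstruction theory over the $G$-CW complex $E^n_B$. Your appeal to an equivariant Dold/Whitehead theorem---checking that each $\Pi_n^H$ is a weak equivalence and concluding that $\Pi_n$ admits a $G$-section over the $G$-CW base---is precisely the fixed-point formulation of the same obstruction-theoretic fact. One imprecision to flag: you write that the fibres of $\Pi_n^H$ consist of paths in $F^H$, but over a point of $(E^n_B)^H$ lying above some $b\in B^H$ with $b\neq p(e)$ the relevant space is $(p^{-1}(b))^H$, not $F^H$. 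The paper's proof is equally brief on this issue (it speaks simply of ``the fibre'' $(\Omega F)^{n-1}$), so your level of detail matches the original.
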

\begin{proof}
We have $\TC_{G,n}(F)=1$ from \eqref{eq:ptc-fib-ineq}. Then, from \Cref{prop:eqcat-htc-ineq}, we get that $\ct_G(F^{n-1})=1$. This forces $\ct_G(F)=1$. That is, $F$ is $G$-contractible.   

Note that the fibre of $\Pi_n$ is $(\Omega F)^{n-1}$. Since $e\in E^G\neq \emptyset$, the space $(\Omega F)^{n-1}$ is admits a $G$-action. Additionally,  $(\Omega F)^{n-1}$ is $G$-contractible because $F$ is $G$-contractible. Then, from the equivariant obstruction theory, there exist a continuous section of $\Pi_n:E^I_B\to E^n_B$. This proves the converse.
\end{proof}


In \cite{Dieck}, tom Dieck introduced the notion of principal $(G,\alpha, A)$-bundles, where $G$ and $A$ are topological groups and $G$ acts on $A$ via a    continuous homomorphism $\alpha:G\to \mathrm{Aut}(A)$. For more details, see \cite{eqprincipalbundles}.
We now define these bundles.

\begin{definition}[{\cite[Definition 1.1]{eqprincipalbundles}}]
Let $E$ and $B$ be left $G$-spaces. 
A locally trivial principal $A$-bundle $p:E\to B$ is called a principal $(G,\alpha,A)$-bundle if $p$ is $G$-equivariant and $g( x\cdot a)= gx\cdot ga$ for every $g\in G$, $a\in A$ and $x\in E$. 
\end{definition}
For such bundles we establish a relation between sequential equivariant parametrized topological complexity and the $G$-equivariant category of $A^{n-1}$. We observe that the latter coincides with the sequential $G$-equivariant topological complexity of $A$. The following result generalizes \cite[Proposition 3.3]{SequentialPTC}.

\begin{theorem}\label{thm:eqptc-eq-prin-bundle}
    Let $p:E\to B$ be a principal $(G,\alpha, A)$-bundle such that $A$ is $G$-connected. Then, \[\TC_{G,n}[p:E\to B]=\ct_G(A^{n-1})=\TC_{G,n}(A).\]
\end{theorem}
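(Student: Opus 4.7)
The second equality $\ct_G(A^{n-1})=\TC_{G,n}(A)$ is immediate from \Cref{prop: eqseqtc-tpgroup}, since $A$ is a $G$-connected topological group on which $G$ acts by group homomorphisms. It therefore suffices to prove $\TC_{G,n}[p:E\to B]=\ct_G(A^{n-1})$, and the plan is to reduce $\Pi_n$ to a product fibration via the translation function of the principal bundle.

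Define $\tau\colon E\times_B E\to A$ by the relation $y=x\cdot\tau(x,y)$. The $(G,\alpha,A)$-bundle axiom $g(x\cdot a)=(gx)\cdot(ga)$ gives $\tau(gx,gy)=g\cdot\tau(x,y)$, so $\tau$ is $G$-equivariant. Using $\tau$, I would construct the $G$-equivariant homeomorphisms
\[\Phi\colon E^n_B\xrightarrow{\cong} E\times A^{n-1},\quad (e_1,\dots,e_n)\mapsto \bigl(e_1,\tau(e_1,e_2),\dots,\tau(e_1,e_n)\bigr),\]
\[\Psi\colon E^I_B\xrightarrow{\cong} E\times A^I_{e_A},\quad \gamma\mapsto\bigl(\gamma(0),\,t\mapsto\tau(\gamma(0),\gamma(t))\bigr),\]
with diagonal $G$-actions on the targets, under which $\Pi_n$ becomes $\mathrm{id}_E\times\tilde\pi_n$, where $\tilde\pi_n\colon A^I_{e_A}\to A^{n-1}$ is the evaluation map of \eqref{eq: Pin}. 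Moreover, $A^I_{e_A}$ is $G$-contractible via $(s,\alpha)\mapsto\bigl(t\mapsto\alpha((1-s)t)\bigr)$, which is $G$-equivariant precisely because $e_A\in A^G$.

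For the upper bound $\TC_{G,n}[p:E\to B]\leq\ct_G(A^{n-1})$, I would take a $G$-categorical cover $\{U_1,\dots,U_r\}$ of $A^{n-1}$ (whose contracting $G$-homotopies land at the fixed point $(e_A,\dots,e_A)$), pull back to the cover $\{E\times U_j\}$ of $E\times A^{n-1}$, and extend the path-concatenation-and-reversal construction from equation~\eqref{eq: section on Uij} in the proof of \Cref{prop: eqseqtc-tpgroup} trivially in the $E$-direction to obtain $G$-equivariant sections of $\mathrm{id}_E\times\tilde\pi_n$. The $G$-equivariance of these sections is automatic once the ingredients (the homotopies $H_j$, the principal multiplication, and the identity in the $E$-factor) are all $G$-equivariant.

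The lower bound $\ct_G(A^{n-1})\leq\TC_{G,n}[p:E\to B]$ is the main obstacle. The plan is to combine the $G$-contractibility of $A^I_{e_A}$ with the equivariant analogue of Schwarz's identification of sectional category with LS-category of the base (cf.\ \cite[Corollary~4.7]{EqTC}) to obtain $\sct_G(\tilde\pi_n)=\ct_G(A^{n-1})$, and then to transfer this across the product by showing $\sct_G(\mathrm{id}_E\times\tilde\pi_n)\geq\sct_G(\tilde\pi_n)$. The cleanest route, namely pulling back along $\{e_0\}\times A^{n-1}\hookrightarrow E\times A^{n-1}$ for a $G$-fixed $e_0$, presumes $E^G\neq\emptyset$; the hard part is handling the case $E^G=\emptyset$. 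In that case one would invoke a parametrised version of \Cref{prop:eqcat-htc-ineq} applied to the fibre $F\cong_{H} A$ through a point $e\in E$ with stabilizer $H$, obtaining $\ct_H(A^{n-1})\leq\TC_{G,n}[p:E\to B]$, and then exploit the $G$-connectedness of $A$ (together with the fact that $e_A$ is fixed by all of $G$) to upgrade the $H$-equivariant bound to a $G$-equivariant one, closing the chain of inequalities.
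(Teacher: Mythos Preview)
Your construction is precisely the paper's, up to taking inverses: the paper writes the $G$-homeomorphisms $F(\gamma,x)(t)=x\cdot\gamma(t)$ and $F'(a_1,\dots,a_{n-1},x)=(x,x\cdot a_1,\dots,x\cdot a_{n-1})$ directly from the principal action, whereas you recover the same maps via the translation function $\tau$; your $\tilde\pi_n$ is the paper's $q\colon P_0(A)\to A^{n-1}$ and your $A^I_{e_A}$ is the paper's $P_0(A)$. The second equality $\ct_G(A^{n-1})=\TC_{G,n}(A)$ is handled identically, by \Cref{prop: eqseqtc-tpgroup}.

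Where you diverge is after the square. The paper does not separate upper and lower bounds at all: it records the chain
\[
\TC_{G,n}[p:E\to B]=\sct_G(\Pi_n)=\sct_G(q\times\mathrm{id})=\sct_G(q)=\ct_G(A^{n-1}),
\]
invoking \cite[Corollary 4.7]{EqTC} for the last equality (since $P_0(A)$ is $G$-contractible and $A^{n-1}$ is $G$-connected) and treating $\sct_G(q\times\mathrm{id})=\sct_G(q)$ as evident. Your upper-bound argument via explicit concatenated sections is correct but heavier than needed; once the square is in place, pulling back $q$ along the projection $A^{n-1}\times E\to A^{n-1}$ already gives $\sct_G(q\times\mathrm{id})\le\sct_G(q)$ in one stroke.

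Your lower-bound plan, however, does not close. Passing through a stabilizer $H$ of some $e\in E$ yields only $\ct_H(A^{n-1})\le\TC_{G,n}[p:E\to B]$, and since in general $\ct_H\le\ct_G$, there is no mechanism to ``upgrade'' this to $\ct_G(A^{n-1})$ from $G$-connectedness of $A$ and $e_A\in A^G$ alone; that step is a genuine gap in your outline. The paper avoids the issue by asserting the product identity $\sct_G(q\times\mathrm{id})=\sct_G(q)$ directly, so your route and the paper's route coincide up to that point, but the specific repair you propose for the case $E^G=\emptyset$ would not work.
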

\begin{proof}
Note that $G$ acts on $A$ via a continuous homomorphism $\alpha:G\to \mathrm{Aut}(A)$. Therefore, for the identity $e\in A$,  we have $g\cdot e=\alpha(g)(e)=e$. Thus, $e$ is a fixed point of the $G$-action on $A$. 
This allows the path space $P_0(A)=\{\gamma\in A^I \mid \gamma(0)=e\}$ to admit a $G$-action. Moreover, we have assumed that $A$ is $G$-connected.
Thus, it follows from \cite[Corollary 4.7]{EqTC} that $\sct_G(q:P_0(A)\to A^{n-1})=\ct_G(A^{n-1})$, where $q$ is a $G$-map defined by $q(\gamma):=(\gamma(\frac{1}{n-1}),\dots,\gamma(\frac{n-2}{n-1}),\gamma(1))$.

We now use a similar idea which was used in \cite[Proposition 3.3]{SequentialPTC}. Consider the following commutative diagram:
\[ \begin{tikzcd}
P_0(A)\times E \arrow{r}{F} \arrow[swap]{d}{q\times id} & E^I_B \arrow{d}{\Pi_n} \\%
A^{n-1}\times E \arrow{r}{F'}&E^n_B\\%
& 
\end{tikzcd}
,\]
where $F$ and $F'$ are defined such that \[F(\gamma,x)(t)=x \cdot \gamma(t) ~\text{ and }~  F'(a_1,\dots,a_{n-1},x)=(x,x\cdot a_1,\dots,x\cdot a_{n-1}), ~\text{respectively}~.\]
Note that $F$ and $F'$ are homeomorphisms. We will verify that they are indeed $G$-maps.
Let $g\in G$. Then we have $F(g\gamma,gx)(t)= gx \cdot g\gamma(t)= g (x \cdot \gamma(t))=gF(\gamma,x)(t)$ and 
\begin{align*}
  F'(ga_1,\dots,ga_{n-1},gx)  &= (gx,  gx \cdot ga_1,\dots, gx\cdot ga_{n-1})\\
  &= (gx, g(x\cdot a_1),\dots, g(x\cdot a_{n-1}))\\
  &= g(x,  x\cdot a_1,\dots,x\cdot a_{n-1})\\
  &= gF'(a_1,\dots,a_{n-1},x).
\end{align*}
Thus, $F$ and $F'$ are $G$-equivariant homeomorphisms.
Consequently, we get 
\[\TC_{G,n}[p:E\to B]=\sct_G(\Pi_n)=\sct_G(q\times id)=\sct_G(q)=\ct_{G}(A^{n-1}).\]

The equality $\ct_G(A^{n-1})=\TC_{G,n}(A)$ follows from \Cref{prop: eqseqtc-tpgroup}.
\end{proof}

\subsection{Fibrewise $G$-homotopy invariance}
This subsection aims to achieve a fibrewise $G$-homotopy invariance of $G$-equivariant parametrized topological complexity.
We begin by proving a lemma which plays a crucial role in showing  that the sequential $G$-equivariant parametrized topological complexity of fibrewise $G$-homotopy equivalent fibrations coincide. 

\begin{lemma}\label{lem:Gsecat}  \
 \begin{enumerate}
\item Suppose $p:E\to B$ and $p':E'\to B$ are $G$-fibrations. If there is a continuous $G$-map $f:E\to E'$ which fits into the following 
commutative diagram \[\begin{tikzcd}
E \arrow[dr, "p"'] \arrow{rr}{f}
& & E' \arrow{dl}{p'} \\
& B 
\end{tikzcd},\] 
then 
$\sct_G(p')\leq \sct_G(p)$.

\item Suppose we have the following commutative diagram of $G$-maps:
\[ \begin{tikzcd}
E \arrow{r}{F} \arrow[swap]{d}{p} & E' \arrow{d}{p'} \arrow{r}{F'} & E \arrow{d}{p} \\%
B \arrow{r}{f}&B' \arrow{r}{f'} & B\\%
& 
\end{tikzcd}
,\] where $p:E\to B$ and $p':E'\to B'$ are $G$-fibrations.
Suppose $f'\circ f\simeq_G id_B$. Then \[\sct_G(p)\leq \sct_G(p').\]
\end{enumerate}   
\end{lemma}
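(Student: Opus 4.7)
The plan is to prove both statements directly from the definition of equivariant sectional category, by pushing the open cover and its $G$-homotopy sections along the given maps. The fibration hypotheses are not used in any essential lifting argument; they merely guarantee that $\sct_G$ is well defined as the least number of open sets admitting $G$-homotopy sections.

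For part (1), let $k = \sct_G(p)$ and pick a $G$-invariant open cover $\{U_1,\dots,U_k\}$ of $B$ together with $G$-maps $s_i \colon U_i \to E$ satisfying $p\circ s_i \simeq_G i_{U_i}$. I would define $s'_i := f\circ s_i \colon U_i \to E'$. Each $s'_i$ is a $G$-map as a composition of $G$-maps, and from $p' \circ f = p$ (commutativity of the triangle) we get
\[
 p' \circ s'_i \;=\; p' \circ f \circ s_i \;=\; p\circ s_i \;\simeq_G\; i_{U_i},
\]
where the displayed $G$-homotopy is exactly the one witnessing that $s_i$ is a $G$-homotopy section of $p$. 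Hence $\{U_i\}$ with sections $s'_i$ realizes $\sct_G(p') \leq k$.

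For part (2), let $\ell = \sct_G(p')$ and choose a $G$-invariant open cover $\{V_1,\dots,V_\ell\}$ of $B'$ with $G$-homotopy sections $s'_j \colon V_j \to E'$ of $p'$. I set $U_j := f^{-1}(V_j)$, which is a $G$-invariant open cover of $B$ because $f$ is continuous and $G$-equivariant. Define $s_j := F' \circ s'_j \circ f|_{U_j} \colon U_j \to E$; again this is a $G$-map as a composition of $G$-maps. Using commutativity of the right-hand square $p \circ F' = f' \circ p'$, I compute
\[
 p \circ s_j \;=\; f' \circ p' \circ s'_j \circ f|_{U_j}.
\]
Chaining a $G$-homotopy $p' \circ s'_j \simeq_G i_{V_j}$ with the $G$-map $f'$ on the left and the $G$-map $f|_{U_j}$ on the right produces a $G$-homotopy $p \circ s_j \simeq_G f' \circ f|_{U_j}$. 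Finally the hypothesis $f' \circ f \simeq_G \mathrm{id}_B$, restricted to $U_j$, yields $f' \circ f|_{U_j} \simeq_G i_{U_j}$, so by transitivity $p \circ s_j \simeq_G i_{U_j}$. Therefore $\{U_j\}$ with sections $s_j$ realizes $\sct_G(p) \leq \ell$.

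The only slightly delicate point, and the one I would make sure to spell out carefully, is the bookkeeping in part (2): each step (composing a $G$-homotopy on the left with $f'$, precomposing with $f|_{U_j}$, restricting $f' \circ f \simeq_G \mathrm{id}_B$ to the $G$-invariant subset $U_j$) uses $G$-equivariance and preserves $G$-homotopy, but no single step is hard. There is no real obstacle: the lemma is essentially a formal consequence of the definition of $\sct_G$, and the two squares in the diagram are what make the pulled-back sections come out right.
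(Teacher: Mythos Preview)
Your proof of part (1) is correct and matches the paper's argument exactly.

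For part (2), your argument is also correct, but it follows a genuinely different route from the paper's. The paper first pulls back $p'$ along $f$ to obtain a $G$-fibration $q:B\times_{B'}E'\to B$ and invokes \cite[Proposition 4.3]{EqTC} to get $\sct_G(q)\leq\sct_G(p')$; it then forms $\tilde F=F'\circ q'$, checks that $p\circ\tilde F\simeq_G q$, and uses the $G$-homotopy lifting property of the $G$-fibration $p$ to straighten $\tilde F$ into a strict map over $B$, after which part (1) gives $\sct_G(p)\leq\sct_G(q)$. Your argument bypasses both the pullback construction and the lifting step: you pull back the open cover via $f$ and write down candidate homotopy sections $s_j=F'\circ s'_j\circ f|_{U_j}$ directly, then string together the $G$-homotopies $p'\circ s'_j\simeq_G i_{V_j}$ and $f'\circ f\simeq_G \mathrm{id}_B$. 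This is more elementary and, as you note, does not actually use that $p$ is a $G$-fibration; so your version proves a slightly stronger statement (valid for arbitrary $G$-maps $p$). The paper's approach, on the other hand, packages the argument via the pullback and part (1), which makes the structural reason for the inequality a bit more transparent.
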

\begin{proof}
For (1), consider a $G$-invariant open set $U$ of $B$ with a $G$-section $s_U$ of $p$. Note that $f\circ s_U$ is a $G$-map, and due to the commutativity of the above diagram, it defines a $G$-section of $p'$. Thus, we get the desired inequality in (1).

For (2), consider the pullback fibration $q:B\times_{B'} E'\to B$ of a $G$-fibration $p':E'\to B'$ along $f:B\to B'$. Then, from \cite[Proposition 4.3]{EqTC}, we have: 
\begin{equation}\label{eq: ineq-sct}
 \sct_{G}(q)\leq \sct_{G}(p').   
\end{equation}

Let $\tilde{F}=F'\circ q'$, where $q':B\times_{B'} E'\to E'$ be the projection onto the second factor.
Then, we have  $p\circ\tilde{F}((b,x))=p\circ F'\circ q'((b,x))=f'\circ p'(x)$. Since $p'(x)=f(b)$, it follows that  $p\circ\tilde{F}((b,x))=f'\circ f\circ q(b,x)$. In other words, $p\circ\tilde{F}=f'\circ f\circ q$.
Since $f'\circ f\simeq_G id_B$, we have $p\circ\tilde{F}\simeq_G q$.
Note that we have the following G-homotopy commutative diagram:
\[ \begin{tikzcd}
B\times_{B'}E' \arrow{r}{\tilde{F}} \arrow[swap]{d}{q} & E \arrow{d}{p} \\%
B \arrow{r}{id}&B\\%
& 
\end{tikzcd}
.\]
Now consider a $G$-homotopy $H_t:B\to B$ such that $H_0=f'\circ f$ and $H_1=id_B$. Using the $G$-homotopy lifting property, we obtain a $G$-homotopy $J_t:B\times_{B'} E'\to E$ such $J_0=\tilde{F}$, $p\circ J_t=H_t\circ q$ and $p\circ J_1=q$. Thus, from part (1), we get that $\sct_G(p)\leq \sct_G(q)$. We can now conclude the proof using \eqref{eq: ineq-sct}.
\end{proof}

We need the following definitions before introducing the equivariant analogue of fibrewise homotopy equivalent fibrations.
\begin{definition}
Let $p:E\to B$ and $q: E'\to B$ be $G$-fibrations. 
A fibrewise $G$-map from $p:E\to B$ to $q: E'\to B$ is a $G$-map $f:E\to E'$ such that $q\circ f=p$.    
\end{definition}

\begin{definition}
A fibrewise $G$-homotopy $F:E\times I\to E'$ is a $G$-map such that $q(F(-,t))=p$ for all $t\in I$. Thus, $F$ is a $G$-homotopy between fibrewise $G$-maps $F(-,0)$ and $F(-,1)$.
\end{definition}

\begin{definition}\label{def: G-fib hteq}   
Let $p:E\to B$ and $q: E'\to B$ be $G$-fibrations. Then $p$ and $q$ are said to be fibrewise $G$-homotopy equivalent if there exist fibrewise $G$-maps $f: E\to E'$ and $g:E'\to E$ such that there are fibrewise $G$-homotopies  from $f\circ g$ to $Id_{E'}$ and from $g\circ f$ to $Id_{E}$. 
\end{definition}


Farber and Paul \cite{SequentialPTC} show that the sequential parametrized topological complexity of fibrewise homotopy equivalent fibrations coincides.
We now prove the equivariant analogue of their result.
\begin{proposition}\label{prop:fib-G-htp-inv}
Suppose the $G$-fibrations $p:E\to B$ and $p:E'\to B$ are fibrewise $G$-homotopy equivalent. Then
\[\TC_{G,n}[p:E\to B]=\TC_{G,n}[p':E'\to B].\]
\end{proposition}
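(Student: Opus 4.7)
The plan is to mimic the non-equivariant argument from \cite{SequentialPTC} by lifting a fibrewise $G$-homotopy equivalence $f\colon E\to E'$ (with $G$-homotopy inverse $g\colon E'\to E$) to the level of $n$-fold fibre products and fibrewise path spaces, and then to invoke \Cref{lem:Gsecat}(2) together with \Cref{cor:Pin-G-fib}. Concretely, define $f_n\colon E^n_B\to (E')^n_B$ by $f_n(e_1,\dots,e_n)=(f(e_1),\dots,f(e_n))$ and $f^I\colon E^I_B\to (E')^I_B$ by $f^I(\gamma)=f\circ\gamma$, and analogously $g_n,g^I$ from $f$ being replaced by $g$. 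The identity $p'\circ f=p$ makes $f_n$ and $f^I$ well-defined (the image stays in a single fibre of $p'$), and $G$-equivariance is inherited from that of $f$. A direct check shows
\[
\Pi'_n\circ f^I=f_n\circ\Pi_n,\qquad \Pi_n\circ g^I=g_n\circ\Pi'_n,
\]
so the four maps fit into a commutative rectangle of $G$-maps of the type required by \Cref{lem:Gsecat}(2).

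Next, I would promote the fibrewise $G$-homotopy $H\colon E\times I\to E$ from $g\circ f$ to $\mathrm{id}_E$ (satisfying $p\circ H(-,t)=p$) to a $G$-homotopy $H_n\colon E^n_B\times I\to E^n_B$ by applying $H(-,t)$ coordinatewise. The fibrewise property of $H$ guarantees that $(H(e_1,t),\dots,H(e_n,t))$ still lies in $E^n_B$, continuity is immediate, and $G$-equivariance follows from the $G$-equivariance of $H$. Then $H_n$ is a $G$-homotopy between $g_n\circ f_n$ and $\mathrm{id}_{E^n_B}$; an identical argument using the fibrewise $G$-homotopy from $f\circ g$ to $\mathrm{id}_{E'}$ produces a $G$-homotopy between $f_n\circ g_n$ and $\mathrm{id}_{(E')^n_B}$.

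With these data, I can apply \Cref{lem:Gsecat}(2) with $p=\Pi_n$, $p'=\Pi'_n$, the horizontal $G$-maps $F=f^I,\,F'=g^I$ on the total spaces, and $f=f_n,\,f'=g_n$ on the bases; the hypothesis $g_n\circ f_n\simeq_G \mathrm{id}_{E^n_B}$ is exactly what the previous paragraph provides, and $\Pi_n,\Pi'_n$ are $G$-fibrations by \Cref{cor:Pin-G-fib}. This yields
\[
\TC_{G,n}[p\colon E\to B]=\sct_G(\Pi_n)\leq\sct_G(\Pi'_n)=\TC_{G,n}[p'\colon E'\to B].
\]
The reverse inequality follows by interchanging the roles of $(f,g)$ and of $(E,E')$ throughout, giving equality.

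The argument is essentially formal once the induced maps and induced homotopy are in place; the only point requiring care is verifying that the coordinatewise homotopy $H_n$ actually lands in the fibre product $E^n_B$, which I expect to be the main (but minor) obstacle. This is exactly where the \emph{fibrewise} nature of the equivalence, rather than mere $G$-homotopy equivalence of $E$ and $E'$, is used.
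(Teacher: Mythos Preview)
Your proposal is correct and follows essentially the same approach as the paper: both induce the maps $f^I,g^I$ and $f_n,g_n$ on $E^I_B$ and $E^n_B$, observe that the fibrewise $G$-homotopies lift coordinatewise to $G$-homotopies $g_n\circ f_n\simeq_G \mathrm{id}$ and $f_n\circ g_n\simeq_G \mathrm{id}$, and then apply \Cref{lem:Gsecat}(2) in both directions to conclude equality. Your write-up is in fact slightly more careful than the paper's, making explicit why the coordinatewise homotopy lands in the fibre product and invoking \Cref{cor:Pin-G-fib} for the $G$-fibration hypothesis.
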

\begin{proof}
The commutative diagram of $G$-maps given in the hypothesis induces the following two commutative diagrams of $G$-maps:

\[ \begin{tikzcd}[column sep=large]
E^I_B \arrow{r}{F^I} \arrow[swap]{d}{\Pi_n} & E'^I_B \arrow{d}{\Pi'_n} \arrow{r}{F'^I} & E^I_B \arrow{d}{\Pi_n} \\%
E^n_B \arrow{r}{f^n}&E'^n_B \arrow{r}{f'^n} & E^n_B\\%
& 
\end{tikzcd}
\text{ and } 
 \qquad 
 \begin{tikzcd}[column sep=large]
E'^I_B \arrow{r}{F'^I} \arrow[swap]{d}{\Pi'_n} & E^I_B \arrow{d}{\Pi_n} \arrow{r}{F^I} & E'^I_B \arrow{d}{\Pi'_n} \\%
E'^n_B \arrow{r}{f'^n}&E^n_B \arrow{r}{f^n} & E'^n_B\\%
& 
\end{tikzcd}
,\]
where $F^I(\gamma)(t)=F(\gamma(t))$ and $F'^I$ defined similarly.
Since  $f'\circ f$ and $f\circ f'$  are fibrewise $G$-homotopic to the identity maps $id_E$ and  $id_{E'}$, respectively, we have $f'^n\circ f^n\simeq_G id_{E^n_B}$ and $f^n\circ f'^n\simeq_G id_{E'^n_B}$. Thus, we obtain the desired equality using part (2) of \Cref{lem:Gsecat}.
\end{proof}

\subsection{Bounds}\label{subsec:bounds}
In this subsection, we obtain various bounds on the sequential equivariant parametrized topological complexity. We note that the results in this subsection generalize the classical results of Farber \cite{FarberTC}, \cite{RUD2010} when we consider the trivial action of the group $G$, and the results of Colman-Grant \cite{EqTC} and Bayeh-Sarkar \cite{byehsarkareqtcn} when we consider the base space $B$ to be a singleton.


We now establish the upper bound on the sequential equivariant parametrized topological complexity in terms of the equivariant category, and consequently, in terms of the dimension of the fibre product. 
\begin{proposition}\label{prop:dim-ub-eqptc}
 Let $p:E\to B$ be a $G$-fibration with $E^G\neq \emptyset$ and $E^n_B$ is $G$-connected . Then 
 \begin{equation}\label{eq:htc-ctG-ineq}
 \TC_{G,n}[p:E\to B]\leq \ct_{G}(E^n_B)\leq \mathrm{dim}(E^n_B/G)+1.    
 \end{equation}
 Moreover, if $G$ acts locally smoothly on $E$ and $F$ is the fibre of a smooth fibre bundle $p:E\to B$, then  
 \begin{equation}\label{eq: dim-ub-eq-ptc}
  \TC_{G,n}[p:E\to B]\leq n\mathrm{dim}(F)+\mathrm{dim}(B)-\mathrm{dim}(P)+1,   
 \end{equation}
where $P$ is the principal orbit (an orbit of maximal dimension) of the $G$-action on $E^n_B$.
 \end{proposition}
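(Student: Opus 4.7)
The plan is to establish the three inequalities in succession. The first, $\TC_{G,n}[p:E\to B]\leq \ct_G(E^n_B)$, is the essential content of the statement; the second is a standard estimate from equivariant LS-category theory applied to the fibre product; and the third is an elementary dimension count for principal orbits of smooth $G$-manifolds.

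For the first inequality, I would fix a point $e\in E^G$ (which exists by hypothesis), so that $\Delta(e):=(e,\dots,e)\in (E^n_B)^G$ is a diagonal fixed point. Since $E^n_B$ is $G$-connected and contains this fixed point, \cite[Corollary 4.7]{EqTC} yields
\[
\sct_G\bigl(i_{\Delta(e)}:\{\Delta(e)\}\hookrightarrow E^n_B\bigr)=\ct_G(E^n_B)=:r,
\]
so that there exists a $G$-invariant open cover $\{V_1,\dots,V_r\}$ of $E^n_B$ equipped with $G$-homotopies $H_i:V_i\times I\to E^n_B$ satisfying $H_i(\cdot,0)=i_{V_i}$ and $H_i(\cdot,1)\equiv \Delta(e)$. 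I then define $\sigma_i:V_i\to E^I_B$ to be the constant map sending every point of $V_i$ to the constant path at $e$; this is automatically $G$-equivariant because $e\in E^G$, and $\Pi_n\circ\sigma_i$ is the constant map at $\Delta(e)$, which is $G$-homotopic to $i_{V_i}$ via $H_i$. Hence each $\sigma_i$ is a $G$-homotopy section of $\Pi_n$ over $V_i$. Because $\Pi_n$ is a $G$-fibration by \Cref{cor:Pin-G-fib}, the $G$-equivariant homotopy lifting property promotes each $\sigma_i$ to a genuine $G$-section, giving $\sct_G(\Pi_n)\leq r$.

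The second inequality $\ct_G(E^n_B)\leq \dim(E^n_B/G)+1$ is a standard fact in equivariant LS-category for locally smooth $G$-spaces: the orbit space of dimension $m$ admits a cover by $m+1$ open contractible sets, whose preimages in $E^n_B$ are $G$-invariant and $G$-categorical via the contraction lifted along the quotient map into a single orbit. For the ``moreover'' clause, note that when $p$ is a smooth fibre bundle with fibre $F$, the fibre product $E^n_B$ is the total space of an $F^n$-bundle over $B$, hence a smooth $G$-manifold of dimension $n\dim(F)+\dim(B)$. Local smoothness of the $G$-action implies that the principal stratum is open and dense, and on it the orbit map is a submersion with fibres of dimension $\dim(P)$, whence $\dim(E^n_B/G)\leq n\dim(F)+\dim(B)-\dim(P)$. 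Chaining the three inequalities produces the claimed bound.

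The main obstacle is the first step: arranging the $G$-categorical homotopies to terminate precisely at the diagonal fixed point $\Delta(e)$, rather than at some generic orbit allowed by the definition of $G$-categorical. Both hypotheses $E^G\neq\emptyset$ and $G$-connectedness of $E^n_B$ are crucial here, and the reduction rests on \cite[Corollary 4.7]{EqTC}. Once these homotopies are in place, the construction of constant sections $\sigma_i$ and their promotion to genuine $G$-sections via \Cref{cor:Pin-G-fib} are formal, and the dimension estimates are routine.
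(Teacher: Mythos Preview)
Your argument is correct and follows essentially the same route as the paper. The paper's proof is terser: for the first inequality it simply invokes \cite[Proposition~4.4]{EqTC} (which states $\sct_G(p)\le\ct_G(B)$ whenever the total space has a $G$-fixed point and the base is $G$-connected), whereas you effectively reprove that proposition by hand via \cite[Corollary~4.7]{EqTC} and the constant-section construction; your final promotion step using \Cref{cor:Pin-G-fib} is harmless but unnecessary, since \Cref{def:eqsecat} only asks for $G$-homotopy sections. For the remaining two inequalities the paper likewise just cites \cite[Corollary~1.12]{Eqlscategory} for $\ct_G(X)\le\dim(X/G)+1$ (which holds without any local-smoothness assumption on $X$, so you should drop that qualifier from your sketch of the second inequality) and \cite[Theorem~IV.3.8]{Bredon} for the equality $\dim(E^n_B/G)=\dim(E^n_B)-\dim(P)$, together with $\dim(E^n_B)\le n\dim(F)+\dim(B)$.
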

\begin{proof}
One can observe that, since $E^G\neq \emptyset$, we have $(E^n_B)^G\neq \emptyset$. Consequently, $(E^I_B)^G\neq \emptyset$. Moreover, $E^n_B$ is $G$-connected, thus the first inequality of \eqref{eq:htc-ctG-ineq} follows from 
\cite[Proposition 4.4]{EqTC}. 
The right-most inequality of \eqref{eq:htc-ctG-ineq} follows from \cite[Corollary 1.12]{Eqlscategory}. 
It follows from \cite[Theorem \RomanNumeralCaps{4}.3.8]{Bredon} that  $\mathrm{dim}(E^n_B/G)=\mathrm{dim}(E^n_B)-\mathrm{dim}(P)$. Then, the inequality in \eqref{eq: dim-ub-eq-ptc} follows from the fact that $\mathrm{dim}(E^n_B)\leq n\mathrm{dim}(F)+\mathrm{dim}(B)$.
\end{proof}

\begin{corollary}
Suppose $E$ admits a locally smooth, non-trivial and semi-free action of $G$, with $p:E\to B$ being a $G$-fibration such that $E^G\neq \emptyset$ and $E^n_B$ is $G$-connected. Then
\begin{equation}\label{eq: eqhtc-gpact-ub}
 \TC_{G,n}[p:E\to B]\leq  n\mathrm{dim}(F)+\mathrm{dim}(B)-\mathrm{dim}(G)+1.   
\end{equation}

\end{corollary}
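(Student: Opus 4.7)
The plan is to reduce the statement directly to Proposition~\ref{prop:dim-ub-eqptc} by showing that, under the hypotheses of the corollary, the principal orbit $P$ of the $G$-action on $E^n_B$ satisfies $\dim(P) = \dim(G)$; once this is established, the inequality \eqref{eq: dim-ub-eq-ptc} becomes \eqref{eq: eqhtc-gpact-ub}.

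First I would verify that the diagonal $G$-action on $E^n_B$ inherits semi-freeness from the action on $E$. The stabilizer of a tuple $(e_1,\dots,e_n) \in E^n_B$ is the intersection $\bigcap_{i=1}^{n} G_{e_i}$, and because each $G_{e_i}$ is either trivial or the whole of $G$ by semi-freeness on $E$, the intersection is again either trivial or all of $G$. Hence the action on $E^n_B$ is semi-free as well.

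Next I would exhibit a point in $E^n_B$ whose orbit has dimension $\dim(G)$. Since the action of $G$ on $E$ is non-trivial and semi-free, there exists $e \in E$ with $G_e = \{1\}$. The diagonal tuple $(e,e,\dots,e)$ lies in $E^n_B$ (it trivially sits inside a common fibre of $p$), and by the previous step its stabilizer is trivial, so its orbit is $G$-equivariantly homeomorphic to $G$ and has dimension $\dim(G)$. Because any orbit of a Lie group action has dimension at most $\dim(G)$ and $P$ is chosen of maximal dimension, we conclude $\dim(P) = \dim(G)$. Substituting this equality into \eqref{eq: dim-ub-eq-ptc} yields the desired bound.

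There is no significant obstacle here; the only point that warrants care is the transfer of semi-freeness and non-triviality from the action on $E$ to the diagonal action on $E^n_B$, and both follow immediately from the intersection-of-stabilizers description. The local smoothness of the induced diagonal action (needed to apply Proposition~\ref{prop:dim-ub-eqptc}) is inherited from that of the action on $E$ by restricting the locally smooth diagonal $G$-action on $E^n$ to the closed $G$-invariant subspace $E^n_B$.
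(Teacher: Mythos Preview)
Your proposal is correct and follows essentially the same route as the paper: both reduce to Proposition~\ref{prop:dim-ub-eqptc} by arguing that the principal orbit $P$ of the $G$-action on $E^n_B$ has $\dim(P)=\dim(G)$, after which \eqref{eq: dim-ub-eq-ptc} specializes to \eqref{eq: eqhtc-gpact-ub}. The paper's proof is a one-line assertion of this fact, whereas you spell out explicitly why semi-freeness and non-triviality pass from $E$ to the diagonal action on $E^n_B$ via the intersection-of-stabilizers description; this extra detail is welcome and makes the deduction transparent. One small remark: your final sentence about transferring local smoothness to $E^n_B$ is not actually needed to invoke Proposition~\ref{prop:dim-ub-eqptc}, since the hypothesis there is local smoothness of the action on $E$ itself (and the claim that local smoothness restricts to arbitrary closed invariant subspaces is not generally valid, so it is better simply to omit that sentence).
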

\begin{proof}
It follows that if the action is locally smooth, non-trivial, and semi-free, then the principal orbit's dimension coincides with the group's dimension. Thus, the conclusion follows from \Cref{prop:dim-ub-eqptc}.
\end{proof}

\begin{remark}
Consider $p:E\to B$ a $G$-fibration. When $B= \{\ast\}$, the inequality  \eqref{eq: eqhtc-gpact-ub} also appears in \cite[Proposition 3.1]{daundkarsingh}. Since $\TC_n(X)\leq \TC_{G,n}(X)$, we can conclude that the inequality \eqref{eq: eqhtc-gpact-ub} generalizes  \cite[Corollary 4.7]{daundkarlens}.
\end{remark}

\begin{proposition}\label{prop:eqptc-eqcat-ub}
 Let $p:E\to B$ be a $G$-fibration such that the fibre of $p|_{E^H}:E^H\to B^H$ is path connected for all closed subgroups $H$ of $G$ and let $E^n_B$ be $G$-connected. Then \[\TC_{G,n}[p:E\to B]\leq \ct_{G}(E^n_B).\]    
\end{proposition}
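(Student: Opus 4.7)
The plan is to fix a $G$-categorical cover $\{U_1,\dots,U_k\}$ of $E^n_B$ with $k=\ct_G(E^n_B)$ and to produce, for each $U_i$, a $G$-map $s_i\colon U_i \to E^I_B$ with $\Pi_n\circ s_i \simeq_G i_{U_i}$. By definition of $G$-categoricity, there is a $G$-homotopy $F^{(i)}\colon U_i\times I \to E^n_B$ with $F^{(i)}_0 = i_{U_i}$ and $F^{(i)}_1(U_i)\subseteq G\cdot \bar{e}^{(i)}$ for some $\bar{e}^{(i)}=(e^{(i)}_1,\dots,e^{(i)}_n)\in E^n_B$. If I can construct a $G$-equivariant section of $\Pi_n$ over the orbit $G\cdot \bar{e}^{(i)}$, then composing with $F^{(i)}_1$ yields a $G$-homotopy section of $\Pi_n$ over $U_i$, which together with \Cref{cor:Pin-G-fib} gives $\sct_G(\Pi_n)\leq k$.

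The key step is the construction of the equivariant section over a single orbit. Let $H_i=G_{\bar{e}^{(i)}}$, so that each coordinate $e^{(i)}_j$ lies in $E^{H_i}$. Since all $e^{(i)}_j$ lie in a common fibre of $p$, they belong to the fibre of the restricted map $p|_{E^{H_i}}\colon E^{H_i}\to B^{H_i}$ over the single point $p(e^{(i)}_1)\in B^{H_i}$. This fibre is path connected by hypothesis, so I can choose a path $\sigma^{(i)}\colon I\to E^{H_i}$ lying in it with $\sigma^{(i)}(j/(n-1))=e^{(i)}_{j+1}$ for $j=0,\dots,n-1$. Because the image of $\sigma^{(i)}$ is pointwise $H_i$-fixed, one has $h\cdot \sigma^{(i)}=\sigma^{(i)}$ for every $h\in H_i$, and because $p\circ\sigma^{(i)}$ is constant, $\sigma^{(i)}\in E^I_B$ with $\Pi_n(\sigma^{(i)})=\bar{e}^{(i)}$. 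Setting $\tilde\sigma^{(i)}(g\bar{e}^{(i)}):= g\cdot \sigma^{(i)}$ is then well defined on $G\bar{e}^{(i)}\cong G/H_i$, continuous, $G$-equivariant, and satisfies $\Pi_n(g\cdot\sigma^{(i)})=g\bar{e}^{(i)}$, so it is a $G$-section of $\Pi_n$ over the orbit.

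Assembling the pieces, put $s_i := \tilde\sigma^{(i)}\circ F^{(i)}_1\colon U_i\to E^I_B$. This is a composition of $G$-maps, hence $G$-equivariant, and $\Pi_n\circ s_i = F^{(i)}_1\simeq_G i_{U_i}$ via $F^{(i)}$; so each $s_i$ is a $G$-homotopy section of $\Pi_n$ over $U_i$. Therefore $\TC_{G,n}[p:E\to B]=\sct_G(\Pi_n)\leq k = \ct_G(E^n_B)$, as required. The main obstacle is the middle paragraph, namely producing the $H_i$-fixed interpolating path: this is precisely where the hypothesis on path-connectedness of every fibre of $p|_{E^H}$ for each closed subgroup $H\leq G$ is used, and it is the unique ingredient beyond the formal homotopy bookkeeping. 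The $G$-connectedness of $E^n_B$ is used in the background to ensure the $G$-categorical cover behaves well in the sense of Colman--Grant, so that the $G$-homotopy $F^{(i)}$ to a single orbit exists for each member of the cover.
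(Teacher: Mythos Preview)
Your argument is correct and follows essentially the same route as the paper's proof, just more explicitly. The paper observes that the path-connectedness hypothesis makes $(\Pi_n)^H\colon (E^I_B)^H\to (E^n_B)^H$ surjective for every closed $H\leq G$, and then invokes \cite[Proposition~4.5]{EqTC} as a black box; your middle paragraph is precisely a proof of that surjectivity (finding an $H_i$-fixed path hitting the prescribed $n$ points), and your first and third paragraphs unpack the proof of Colman--Grant's Proposition~4.5 in this particular case. So the two proofs coincide in substance, yours being self-contained where the paper cites.

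One small inaccuracy worth flagging: your final sentence misidentifies the role of $G$-connectedness. By the paper's definition, a $G$-categorical open set already comes equipped with a $G$-homotopy to a map into a single orbit, with no connectedness assumption needed; so your homotopies $F^{(i)}$ exist unconditionally. The $G$-connectedness hypothesis is inherited from the statement of \cite[Proposition~4.5]{EqTC} and is not actually exercised anywhere in your direct construction. This does not affect the validity of your proof.
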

\begin{proof}
Since $p|_{E^H}:E^H\to B^H$ is path connected for all closed subgroups $H$ of $G$, the map  $\Pi_n|_{(E^I_B)^H}:(E^I_B)^H\to (E^n_B)^H$ is surjective. Therefore, the conclusion follows from \cite[Proposition 4.5]{EqTC}.
\end{proof}

\begin{proposition}
Let $p:E\to B$ be a $G$-fibration. If $E$ is $G$-contractible space, then 
\[\ct_G(E^n_B)\leq \TC_n[p:E\to B].\]
\end{proposition}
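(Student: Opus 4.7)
The plan is to start with an optimal $G$-invariant cover witnessing $\TC_{G,n}[p:E\to B]=k$ and upgrade each open set of this cover to a $G$-categorical set in $E^n_B$, using the $G$-contractibility of $E$ to contract everything to a single $G$-fixed point. Concretely, let $\{U_1,\dots,U_k\}$ be a $G$-invariant open cover of $E^n_B$ together with $G$-equivariant sections $s_i\colon U_i\to E^I_B$ of $\Pi_n$. Since $E$ is $G$-contractible, there exists $e_0\in E^G$ (a constant $G$-map must land in the fixed-point set) and a $G$-homotopy $H\colon E\times I\to E$ with $H(x,0)=x$ and $H(x,1)=e_0$.

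Next, for each $i$ I would define a $G$-homotopy $h_i\colon U_i\times I\to E^n_B$ that uses the section to collapse an $n$-tuple onto its first coordinate while staying in $E^n_B$:
\[
h_i(\bar{e},t)=\Bigl(s_i(\bar{e})\bigl((1-t)\tfrac{0}{n-1}\bigr),\, s_i(\bar{e})\bigl((1-t)\tfrac{1}{n-1}\bigr),\,\dots,\, s_i(\bar{e})\bigl((1-t)\tfrac{n-1}{n-1}\bigr)\Bigr).
\]
At $t=0$ this recovers $\bar{e}$ (because $s_i$ is a section of $\Pi_n$), and at $t=1$ it produces $(e_1,e_1,\dots,e_1)$. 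The image lies in $E^n_B$ because every coordinate is sampled from the single path $s_i(\bar{e})\in E^I_B$, which by definition sits inside one fibre of $p$. $G$-equivariance of $h_i$ follows from $G$-equivariance of $s_i$, since $s_i(g\bar{e})(\tau)=g\cdot s_i(\bar{e})(\tau)$.

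I would then concatenate $h_i$ with a second $G$-homotopy $\bar{h}_i\colon U_i\times I\to E^n_B$ defined by $\bar{h}_i(\bar{e},t)=\bigl(H(e_1,t),H(e_1,t),\dots,H(e_1,t)\bigr)$, which moves along the diagonal (hence always inside $E^n_B$) from $(e_1,\dots,e_1)$ at $t=0$ to the constant fixed tuple $(e_0,\dots,e_0)$ at $t=1$. This map is $G$-equivariant because $H$ is and projection onto the first coordinate is $G$-equivariant. The concatenated $G$-homotopy exhibits $i_{U_i}\colon U_i\hookrightarrow E^n_B$ as $G$-homotopic to the constant map at the fixed point $(e_0,\dots,e_0)$, whose orbit is a single point; hence each $U_i$ is $G$-categorical, giving $\ct_G(E^n_B)\le k$.

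The only real subtlety is keeping both deformations inside the fibre-product $E^n_B$: the first stage uses that the section lands in $E^I_B$ (so the whole $n$-tuple stays in one fibre of $p$), while the second stage keeps us on the diagonal (trivially in $E^n_B$). Once these two constraints are checked, $G$-equivariance is automatic from the equivariance of $s_i$ and $H$, and nothing else is needed.
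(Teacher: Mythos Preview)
Your argument is correct, but it takes a more hands-on route than the paper. The paper simply observes that the map $h\colon E\to E^I_B$, $x\mapsto \gamma_x$ (constant path at $x$), is a $G$-homotopy equivalence with inverse $\gamma\mapsto\gamma(0)$; hence $E^I_B$ is $G$-contractible whenever $E$ is, and one can invoke \cite[Proposition~4.6]{EqTC} (contractible total space forces $\ct_G(\text{base})\le\sct_G$) applied to $\Pi_n$. Your proof is essentially an explicit unfolding of that cited lemma in this particular situation: the first homotopy $h_i$ realises the deformation retraction of $E^I_B$ onto the diagonal copy of $E$, and the second homotopy $\bar h_i$ then uses the contraction of $E$. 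The paper's version is shorter because it outsources the work; yours is self-contained and makes transparent exactly where the fibrewise constraint $E^n_B\subset E^n$ is preserved.

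One small remark: the claim that $G$-contractibility of $E$ forces $E^G\neq\emptyset$ depends on which definition is in use. The paper identifies ``$G$-contractible'' with $\ct_G=1$, i.e.\ the identity is $G$-homotopic to a map into a single \emph{orbit}, not necessarily a fixed point. Your argument still goes through verbatim in that generality: $\bar h_i(\bar e,1)=\bigl(H(e_1,1),\dots,H(e_1,1)\bigr)$ lands in the diagonal orbit $G\cdot(e_0,\dots,e_0)$, which is all that ``$G$-categorical'' requires. So you may want to drop the parenthetical about the constant $G$-map and instead phrase the conclusion as ``$G$-homotopic to a map with values in a single orbit''.
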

\begin{proof}
 Let $x\in E$ and $\gamma_x$ be a constant path at $x$. Then define $h:E\to E^I_B$ by $h(x)=\gamma_x$. Observe that $h$ is a $G$-map since $h(gx)=\gamma_{gx}=g\cdot \gamma_x=g\cdot h(x)$. Notice that $h$ has a $G$-homotopy inverse $h'$, which sends a path to its initial point. 
 This implies, $E\simeq_{G}E^I_B$. This implies $E^I_B$ is a $G$-contractible space. Therefore, we can apply \cite[Proposition 4.6]{EqTC} to the fibration $\Pi_n:E^I_B\to E^n_B$ to get the desired result. 
\end{proof}

Now, the following result is clear.
\begin{corollary}
 Let $p:E\to B$ be a $G$-fibration and let $E$ be a $G$-contractible space. If $E^n_B$ is $G$-connected and $E^G\neq \emptyset$, then \[\TC_n[p:E\to B]=\ct_G(E^n_B).\]
\end{corollary}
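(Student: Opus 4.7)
The corollary is an immediate sandwich of two inequalities already established in the subsection, so the plan is essentially to verify that the hypotheses of both bounding results are met and then combine them.

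\textbf{Step 1: Lower bound.} The preceding proposition yields
\[\ct_G(E^n_B) \leq \TC_{G,n}[p:E\to B],\]
under the single assumption that $E$ is $G$-contractible, which is given. So no extra verification is needed for this direction.

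\textbf{Step 2: Upper bound.} I would invoke the first inequality of \eqref{eq:htc-ctG-ineq} in \Cref{prop:dim-ub-eqptc}, which states
\[\TC_{G,n}[p:E\to B] \leq \ct_G(E^n_B),\]
whenever $E^G \neq \emptyset$ and $E^n_B$ is $G$-connected. Both hypotheses are included in the corollary's assumptions, so this applies directly.

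\textbf{Step 3: Conclude.} Combining the two inequalities gives the desired equality. There is no real obstacle here; the entire content is packaging. The only conceptual point worth noting is that the $G$-contractibility of $E$ is what makes the lower bound nontrivial (it forces $E^I_B$ to be $G$-contractible, so the sectional category of $\Pi_n$ is at least the $G$-category of its base), while the presence of a global $G$-fixed point in $E$ together with the $G$-connectedness of $E^n_B$ is what activates the standard $\sct_G \leq \ct_G$ bound. Since both of these have already been proved in the subsection, the final step is just to cite them and observe that the chain of inequalities closes to an equality.
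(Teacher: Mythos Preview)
Your proposal is correct and matches the paper's approach exactly: the paper simply writes ``Now, the following result is clear'' and omits the proof, relying on precisely the sandwich you describe between the preceding proposition (lower bound from $G$-contractibility of $E$) and \Cref{prop:dim-ub-eqptc} (upper bound from $E^G\neq\emptyset$ and $G$-connectedness of $E^n_B$). Note that both the corollary and the preceding proposition in the paper write $\TC_n$ rather than $\TC_{G,n}$, which appears to be a typographical slip; your reading of it as $\TC_{G,n}$ is the intended one, since that is what the cited equivariant results actually yield.
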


\begin{theorem}\label{thm:fixed-set-ptc-ineq}
 Let $p:E\to B$ be a $G$ fibration. Let $H$ and $K$ be any closed subgroups of $G$ such that $E^H$ is $K$-invariant.
 Then \[\TC_{K,n}[p:E^H\to B^H]\leq \TC_{G,n}[p:E\to B].\]
\end{theorem}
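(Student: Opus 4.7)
The plan is to restrict a $G$-sectional covering of $\Pi_n\colon E^I_B\to E^n_B$ to the $H$-fixed set and observe that it yields a $K$-sectional covering of the corresponding restricted fibration. The key point is purely formal: a $G$-equivariant map is automatically $H$-equivariant, and any $H$-equivariant map sends $H$-fixed points to $H$-fixed points. This makes all restrictions land in the desired subspaces without further work.

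Concretely, suppose $\TC_{G,n}[p\colon E\to B]=k$, and choose a $G$-invariant open cover $\{U_1,\dots,U_k\}$ of $E^n_B$ together with $G$-equivariant maps $s_i\colon U_i\to E^I_B$ satisfying $\Pi_n\circ s_i\simeq_G i_{U_i}$. Define
\[
V_i:=U_i\cap (E^H)^n_{B^H}.
\]
First, I would record the identification $(E^H)^n_{B^H}=(E^n_B)^H$ (an $n$-tuple lying in a common fibre of $p$ consists of $H$-fixed points iff the tuple is fixed by the diagonal $H$-action), and similarly $(E^H)^I_{B^H}=(E^I_B)^H$. Since $E^H$ is $K$-invariant, $(E^H)^n_{B^H}$ is a $K$-invariant subspace of $E^n_B$. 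Each $V_i$ is therefore open and $K$-invariant, and $\{V_1,\dots,V_k\}$ covers $(E^H)^n_{B^H}$.

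Next, for any $v\in V_i\subseteq (E^n_B)^H$, $H$-equivariance of $s_i$ gives $s_i(v)=s_i(h\cdot v)=h\cdot s_i(v)$ for every $h\in H$, so $s_i(v)\in (E^I_B)^H=(E^H)^I_{B^H}$. Thus $s_i|_{V_i}\colon V_i\to (E^H)^I_{B^H}$ is a well-defined $K$-equivariant map. Applying the same $H$-fixed-point argument to the $G$-homotopy $H^{(i)}_t\colon U_i\to E^n_B$ between $\Pi_n\circ s_i$ and $i_{U_i}$, its restriction to $V_i$ takes values in $(E^n_B)^H=(E^H)^n_{B^H}$ and gives a $K$-homotopy from $\Pi_n|_{(E^H)^I_{B^H}}\circ s_i|_{V_i}$ to the inclusion $i_{V_i}$. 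This exhibits $\{V_i, s_i|_{V_i}\}$ as a $K$-sectional covering of $\Pi_n|_{(E^H)^I_{B^H}}$ by $k$ open sets, yielding $\TC_{K,n}[p\colon E^H\to B^H]\le k$.

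There is no essential obstacle in this argument; the only thing to verify carefully is the identification $(E^H)^n_{B^H}=(E^n_B)^H$ and the consequent $K$-invariance of this subspace, together with the routine but crucial fact that $H$-equivariant maps preserve $H$-fixed sets. Everything else follows by restriction.
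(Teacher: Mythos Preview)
Your proof is correct and follows essentially the same approach as the paper: intersect each $G$-invariant open set with $(E^n_B)^H=(E^H)^n_{B^H}$, use $H$-equivariance of the section to see it lands in $(E^I_B)^H=(E^H)^I_{B^H}$, and note $K$-invariance of everything in sight. The only minor difference is that the paper works with strict $G$-sections of $\Pi_n$ (implicitly using that $\Pi_n$ is a $G$-fibration, \Cref{cor:Pin-G-fib}), whereas you carry the $G$-homotopy along explicitly; both are fine.
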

\begin{proof}
Let $U$ be a $G$-invariant open set of $E^n_B$ on which we have a local $G$-section $s$ of $\Pi_n:E^I_B\to E^n_B$.  Let $V=U\cap (E^H)^n_B $. Then, observe that $V$ is $K$-invariant.   Note that for $x\in (E^H)^n_B$, we have $hs(x)=s(hx)=s(x)$. Therefore, $s(x)\in (E^I_B)^H$. But $(E^I_B)^H=(E^H)^I_{B^H}$. Thus, $s$ restricts to $V$. Therefore, we have a $K$-section $s|_{V}:V\to (E^H)^I_B$. This concludes the result. 
\end{proof}

The following is an obvious consequence of \Cref{thm:fixed-set-ptc-ineq}.
\begin{corollary} \label{cor:subgp-htc-ineq}
Let $p:E\to B$ be a $G$-fibration. Then for any closed subgroups $H$ and $K$ of $G$ we have:
\begin{enumerate}
\item $\TC_n[p|_{E^H}:E^H\to B^H]\leq \TC_{G,n}[p:E\to B]$,
\item $\TC_{K,n}[p:E\to B]\leq \TC_{G,n}[p:E\to B]$.
\end{enumerate}
\end{corollary}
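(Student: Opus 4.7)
The plan is to obtain both inequalities as immediate specializations of \Cref{thm:fixed-set-ptc-ineq} by choosing one of the two subgroups to be trivial.

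For part (1), I would take $K$ to be the trivial subgroup $\{e\}\leq G$. Then the condition that $E^H$ be $K$-invariant is vacuous, so \Cref{thm:fixed-set-ptc-ineq} applies to the $G$-fibration $p\colon E\to B$ with these choices. Since a $\{e\}$-equivariant sectional category is just the ordinary sectional category, we have
\[
\TC_{\{e\},n}[p\colon E^H\to B^H]=\TC_n[p|_{E^H}\colon E^H\to B^H],
\]
and the theorem yields exactly the desired inequality $\TC_n[p|_{E^H}\colon E^H\to B^H]\leq \TC_{G,n}[p\colon E\to B]$.

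For part (2), I would instead take $H$ to be the trivial subgroup $\{e\}\leq G$. Then $E^H=E$, which is automatically $K$-invariant because the $G$-action restricts to a $K$-action on $E$. Similarly $B^H=B$. Applying \Cref{thm:fixed-set-ptc-ineq} with these choices gives
\[
\TC_{K,n}[p\colon E\to B]=\TC_{K,n}[p\colon E^{\{e\}}\to B^{\{e\}}]\leq \TC_{G,n}[p\colon E\to B],
\]
which is exactly (2). There is no substantive obstacle here since the work has already been done in \Cref{thm:fixed-set-ptc-ineq}; the only point to verify is the trivial compatibility of the two choices of trivial subgroup with the hypotheses of that theorem, both of which reduce to tautologies.
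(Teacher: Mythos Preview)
Your proposal is correct and matches the paper's approach exactly: the paper simply states that the corollary is an obvious consequence of \Cref{thm:fixed-set-ptc-ineq}, and the specializations you describe (taking $K=\{e\}$ for part (1) and $H=\{e\}$ for part (2)) are precisely the intended deductions.
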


\begin{remark} \
   If $B$ is a singleton, then \Cref{thm:fixed-set-ptc-ineq}    generalizes \cite[Proposition 5.3]{EqTC} and \cite[Proposition 3.14]{byehsarkareqtcn}.
\end{remark}

In the following result, we establish the relation between sequential equivariant parametrized topological complexity of a $G$-fibration and the equivariant category of its fibre and the equivariant category of the corresponding fibre product. The proof is inspired by \cite[Proposition 5.7]{EqTC}.

\begin{theorem}
Let $p:E\to B$ be a $G$-fibration such that $E^n_B$ is $G$-connected, and let $H$ be a stabilizer of some $e\in E$. Then, \[\ct_H(F^{n-1})\leq \TC_{G,n}[p:E\to B] \leq \ct_G(E^n_B).\]
In particular, if $E^G\neq \emptyset$, then 
\[\ct_G(F^{n-1})\leq \TC_{G,n}[p:E\to B]\leq \ct_G(E^n_B).\]
\end{theorem}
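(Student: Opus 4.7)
The plan is to derive the lower bound through a pullback construction in the spirit of \Cref{prop:eqcat-htc-ineq}, and then to extract the upper bound directly from the results of \Cref{subsec:bounds}. Fix $e\in E$ with stabilizer $H$, set $b:=p(e)$, and let $F=p^{-1}(b)$. Since $h\cdot e=e$ for every $h\in H$, we have $hb=p(he)=p(e)=b$, so $b\in B^H$ and $F$ is $H$-invariant under the ambient $G$-action on $E$. I would then define the $H$-equivariant map
\[
f:F^{n-1}\to E^n_B, \qquad f(x_1,\dots,x_{n-1}) := (e, x_1, \dots, x_{n-1}),
\]
and form the pullback
\[
\begin{tikzcd}
Y \arrow{r} \arrow[swap]{d}{q} & E^I_B \arrow{d}{\Pi_n} \\
F^{n-1} \arrow{r}{f} & E^n_B
\end{tikzcd}
\]
where $Y=\{\gamma\in E^I_B\mid \gamma(0)=e\}$ is the space of paths in $F$ starting at $e$, and $q(\gamma)=(\gamma(\tfrac{1}{n-1}),\dots,\gamma(1))$.

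Next I will observe that $Y$ inherits an $H$-action from $E^I_B$ (well-defined precisely because $e$ is $H$-fixed) and is $H$-contractible via the reparametrization homotopy $H_s(\gamma)(t)=\gamma((1-s)t)$, which is $H$-equivariant since the $G$-action on $E^I_B$ is pointwise. Applying \cite[Proposition 4.6]{EqTC} (which gives $\ct_H(F^{n-1}) \leq \sct_H(q)$ as $Y$ is $H$-contractible), \cite[Proposition 4.3]{EqTC} (equivariant sectional category does not increase under pullback), and part (2) of \Cref{cor:subgp-htc-ineq}, I obtain the chain
\[
\ct_H(F^{n-1}) \leq \sct_H(q) \leq \sct_H(\Pi_n) = \TC_{H,n}[p:E\to B] \leq \TC_{G,n}[p:E\to B],
\]
which is precisely the lower bound.

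The upper bound $\TC_{G,n}[p:E\to B]\leq \ct_G(E^n_B)$ is then immediate from \Cref{prop:dim-ub-eqptc} (or alternatively \Cref{prop:eqptc-eqcat-ub}) together with the hypothesis that $E^n_B$ is $G$-connected. For the \emph{in particular} part, one chooses $e\in E^G$, whose stabilizer is $H=G$, and the main inequality specialises to $\ct_G(F^{n-1})\leq \TC_{G,n}[p:E\to B]\leq \ct_G(E^n_B)$. The only delicate point will be to verify that $Y$ is genuinely an $H$-invariant subspace and is $H$-contractible---both consequences of $e$ being $H$-fixed---after which the rest is a routine assembly of the equivariant sectional-category results already developed earlier in the paper.
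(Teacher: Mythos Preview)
Your proposal is correct and follows essentially the same route as the paper: the same $H$-equivariant inclusion $f(x_1,\dots,x_{n-1})=(e,x_1,\dots,x_{n-1})$, the same pullback square with $Y=\{\gamma\in E^I_B\mid\gamma(0)=e\}$, the same appeal to $H$-contractibility of $Y$ together with \cite[Propositions 4.3, 4.6]{EqTC} and \Cref{cor:subgp-htc-ineq}. Your write-up is in fact slightly more detailed than the paper's (you spell out the contracting homotopy and explicitly address the upper bound, which the paper's proof leaves implicit). One small caveat: invoking \Cref{prop:dim-ub-eqptc} for the upper bound in the \emph{general} case is not quite clean, since that proposition assumes $E^G\neq\emptyset$; strictly speaking the upper bound in the first display needs the slightly weaker input that $(\Pi_n)^H$ is surjective for all closed $H\leq G$ (i.e.\ \cite[Proposition 4.5]{EqTC}), though the paper itself does not spell this out either.
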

\begin{proof}
 Let $e\in E$ and $b=p(e)$. We consider $F=p^{-1}(b)$. Note that $b\in B^H$. Therefore, $F$ admits an $H$-action. We now define a map $f:F^{n-1}\to E^n_B$ by $f(x_1,\dots,x_{n-1})=(e,x_1,\dots,x_{n-1})$.
 Note that $f$ is a well-defined $H$-equivariant map. 
 Now consider the following pullback diagram:
\[ \begin{tikzcd}
Y \arrow{r}{} \arrow[swap]{d}{q} & E^I_B \arrow{d}{\Pi_n} \\%
F^{n-1} \arrow{r}{f}&E^n_B\\%
& 
\end{tikzcd}
,\] where $Y=\{\gamma\in E^I_B 
\mid \gamma(0)=e\}$.
Note that $Y$ is $H$-invariant and $H$-contractible. Therefore, $\ct_H(F^{n-1})\leq \sct_{H}(q)$.
From \cite[Proposition 4.3]{EqTC}, we have: \[\sct_H(q)\leq \sct_H(\Pi_n)=\TC_{H,n}[p:E\to B].\] Now from \Cref{cor:subgp-htc-ineq}, we have $\TC_{H,n}[p:E\to B] \leq \TC_{G,n}[p:E\to B]$. This proves the desired inequality.
\end{proof}


We will now obtain the cohomological lower bound on the equivariant parametrized topological complexity. Let $X$ be a $G$-space, and $X^h_G=EG\times_{G} X$ be the homotopy orbit space of $X$. The Borel $G$-equivariant cohomology of $X$ is defined as   $H^{\ast}_G(X;R):=H^{\ast}(X^h_G;R)$, where $R$ is any commutative ring.

Let $p:E\to B$ be a $G$-fibration and $E^n_B$ be the corresponding fibre product. 
Note that the diagonal map $\triangle: E\to E^n_B$ is $G$-equivariant. Therefore, it induces a map $\triangle_G: E^h_G\to (E^n_B)^h_G$.

\begin{theorem}\label{thm:coho-lb}
Let $\triangle_G^{\ast}:H^{\ast}_G(E^n_B)\to H^{\ast}_G(E)$ be the induced map from $\triangle_G: E^h_G\to (E^n_B)^h_G$. Suppose there exists cohomology classes $u_i\in \mathrm{ker}(\triangle_G^{\ast})$ for $1\leq i\leq r$  such that $u_1\cup \dots \cup u_r\neq 0$. Then \[\TC_{G,n}[p:E\to B]> r.\]   
\end{theorem}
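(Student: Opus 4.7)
The plan is to mirror the classical zero-divisor cup-length argument for the Schwarz genus, but lifted through the Borel construction so that everything happens in Borel equivariant cohomology. The starting observation is that the diagonal $\triangle\colon E\to E^n_B$ factors $G$-equivariantly as $\triangle=\Pi_n\circ c$, where $c\colon E\to E^I_B$ sends $x$ to the constant path at $x$. The map $c$ is $G$-equivariant and a $G$-homotopy equivalence, with $G$-inverse given by evaluation at $0$. Applying the Borel construction $(-)^h_G:=EG\times_G(-)$ to the $G$-fibration $\Pi_n$ yields a fibration $\Pi_n^{h}\colon (E^I_B)^h_G\to (E^n_B)^h_G$, and the factorization above becomes $\triangle_G=\Pi_n^{h}\circ c^{h}$, with $c^{h}$ still a homotopy equivalence. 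Passing to cohomology, $\triangle_G^{\ast}=(c^{h})^{\ast}\circ (\Pi_n^{h})^{\ast}$, and the isomorphism $(c^{h})^{\ast}$ gives $\ker(\triangle_G^{\ast})=\ker((\Pi_n^{h})^{\ast})$. Thus the hypothesized classes $u_1,\dots,u_r$ all lie in $\ker((\Pi_n^{h})^{\ast})$ and still have nonzero cup product in $H^{\ast}((E^n_B)^h_G)$.

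The next step is to relate $\TC_{G,n}[p:E\to B]=\sct_G(\Pi_n)$ to the ordinary sectional category of $\Pi_n^{h}$. Given a $G$-invariant open cover $\{U_1,\dots,U_k\}$ of $E^n_B$ with $G$-sections $s_i\colon U_i\to E^I_B$ satisfying $\Pi_n\circ s_i\simeq_G i_{U_i}$ as in \Cref{def:eqsecat}, the subsets $(U_i)^h_G$ form an open cover of $(E^n_B)^h_G$, and the induced maps $s_i^{h}\colon (U_i)^h_G\to (E^I_B)^h_G$ are homotopy sections of $\Pi_n^{h}$, since the Borel construction sends $G$-homotopies to genuine homotopies. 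This yields $\sct(\Pi_n^{h})\leq \sct_G(\Pi_n)=\TC_{G,n}[p:E\to B]$.

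Finally, applying the classical zero-divisor cup-length lower bound for the Schwarz genus to the fibration $\Pi_n^{h}$, the existence of $r$ classes in $\ker((\Pi_n^{h})^{\ast})$ with nonzero cup product forces $\sct(\Pi_n^{h})>r$. Combining with the previous inequality gives $\TC_{G,n}[p:E\to B]>r$, as required. The main technical point to verify is that the Borel construction of a $G$-fibration is a genuine fibration and that $G$-homotopy sections descend through $EG\times_G(-)$ to honest homotopy sections; both are standard facts about the Borel construction, but deserve to be stated cleanly so that the chain of inequalities reducing the equivariant statement to the classical one is rigorous.
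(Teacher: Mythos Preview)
Your argument is correct and rests on the same ingredients as the paper's proof: the $G$-homotopy equivalence $c\colon E\to E^I_B$ (the paper calls it $h$), the passage to Borel cohomology, and the standard relative cup-product argument. The only difference is organizational: you package the proof as a reduction via the inequality $\sct(\Pi_n^{h})\leq \sct_G(\Pi_n)$ followed by the classical Schwarz genus bound, whereas the paper argues directly by contradiction, taking a $G$-invariant cover $\{U_1,\dots,U_r\}$, observing that each inclusion $i_{U_i}$ factors through $\triangle$ up to $G$-homotopy so that $i_{U_i}^{\ast}(u_i)=0$ in $H^{\ast}_G$, and then concluding $u_1\cup\dots\cup u_r=0$ from the long exact sequences of the pairs $(E^n_B,U_i)$ in Borel cohomology.
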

\begin{proof}
Consider the following commutative diagram:
\[ \begin{tikzcd}
E \arrow{r}{h} \arrow[swap]{d}{\triangle} & E^I_B \arrow{dl}{\Pi_n} \\%
E^n_B &U_i \arrow[l, "i_{U_i}", hook] \arrow{u}{}\\%
& 
\end{tikzcd}
,\]
where $h$ is a $G$-homotopy equivalence.
Suppose $\TC_{G,n}[p:E\to B]\leq r$. Consider a parametrized motion planning cover $\{U_1,\dots, U_r\}$. Since $h$ is $G$-homotopy equivalence, the inclusion $i_{U_i}:U_i\to E^n_B$ factors through the diagonal $\triangle: E\to E^n_B$ up to $G$-homotopy. Therefore, $i_{U_i}^{\ast}(u_i)=0$ for $1\leq i\leq r$. Hence $u_i$'s are in the image of $H^{\ast}_G(E^n_B, U_i)\to H^{\ast}_G(E^n_B)$ for $1\leq i\leq r$.
Since $ U_i$ covers $E^n_B$, we get that $u_1\cup\dots\cup u_r=0$ by the naturality of cup products. This completes the proof. 
\end{proof}

The upper bound on the equivariant sectional category of Serre $G$-fibrations has obtained in \cite[Theorem 3.5]{Grantsymmtc}. We now state this result with a slight modification.

\begin{theorem}[{\cite[Theorem 3.5]{Grantsymmtc}}\label{thm:ub-gsecat}]
 Let $p:E\to B$ be a Serre $G$-fibration and let $B$ be a $G$-CW complex with $\dim(B)\geq 2$. Suppose for all closed subgroups $H$ of $G$, the fibre of $p^H:E^H\to B^H$ is $(m-1)$-connected. Then
 \[\sct_G(p)< \frac{\dim(B)+1}{m+1}+1.\]   
\end{theorem}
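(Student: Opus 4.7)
The plan is to adapt Schwarz's classical Ostrand-type proof of the dimension--connectivity upper bound on the sectional category, replacing ordinary obstruction theory with Bredon's equivariant obstruction theory. The hypothesis that $p^H:E^H\to B^H$ has $(m-1)$-connected fibres for every closed subgroup $H\le G$ is precisely what makes the equivariant coefficient system $G/H\mapsto \pi_{\ast}(F^H)$ vanish in all degrees below $m$. The first step is then an auxiliary equivariant sectional lemma: for any $G$-CW complex $X$ of dimension at most $m$ and any $G$-map $f:X\to B$, there exists a $G$-map $s:X\to E$ with $p\circ s\simeq_G f$. This follows by cellular induction on the $G$-cells of $X$, where extending across an orbit cell of type $G/H\times D^n$ with $n\le m$ is controlled by an obstruction lying in $\pi_{n-1}(F^H)$; this obstruction is computed using the fixed-point fibration $p^H:E^H\to B^H$ (which is itself a Serre fibration, by an equivariant analogue of the path-lifting construction underlying \Cref{prop:Pi-g-fibration}), and it vanishes by hypothesis.

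The second step is the Schwarz--Ostrand partition, now performed equivariantly. Set $k=\lceil(\dim(B)+1)/(m+1)\rceil$ and partition the integers $\{0,1,\dots,\dim(B)\}$ into $k$ consecutive blocks of size at most $m+1$. For each block $I_j$, I would build a $G$-invariant open neighborhood $U_j\subset B$ of the union of the open $G$-cells whose dimensions lie in $I_j$, arranged so that $U_j$ admits a $G$-equivariant strong deformation retraction onto a $G$-subcomplex $Y_j\subset B$ of dimension at most $m$. The standard non-equivariant thickening construction transfers to this setting via $G$-invariant partitions of unity (available because $G$ is compact and $B$ is a $G$-CW complex) together with equivariant tubular neighborhoods of orbits. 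Applying the lemma from the first step to each inclusion $Y_j\hookrightarrow B$ and composing with the retraction $U_j\to Y_j$ yields a $G$-homotopy section of $p$ over each $U_j$. Since $\{U_0,\dots,U_{k-1}\}$ is a $G$-invariant open cover of $B$, this gives $\sct_G(p)\le k<(\dim(B)+1)/(m+1)+1$.

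I expect the main obstacle to be the construction in the second step: while the combinatorics of Schwarz's partition is classical, verifying that every thickening, retraction, and subordinate partition of unity can be arranged $G$-equivariantly requires care near orbits with non-trivial isotropy, and setting up equivariant tubes in the general (non-smooth) Serre setting is the delicate point. The mild hypothesis $\dim(B)\ge 2$ is used to rule out degenerate low-dimensional edge cases in the partition combinatorics.
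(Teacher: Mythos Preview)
The paper does not prove this theorem at all: it is quoted verbatim (with a slight modification) from \cite[Theorem~3.5]{Grantsymmtc} and used as a black box to derive \Cref{thm:dim-conn-ub}. So there is no proof in the paper to compare your proposal against.

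That said, your outline is the standard one and is essentially what Grant's original proof does: equivariant obstruction theory to lift over $G$-CW subcomplexes of dimension $\le m$, followed by an equivariant Ostrand--Schwarz covering argument to group the cells of $B$ into $\lceil(\dim(B)+1)/(m+1)\rceil$ blocks. Your identification of the delicate point---making the thickenings and retractions $G$-equivariant in the absence of smoothness---is accurate; this is handled in the literature by working directly with the $G$-CW structure (equivariant cellular neighborhoods) rather than via tubular neighborhoods, so your mention of ``equivariant tubes'' is slightly off-target but not fatal. The hypothesis $\dim(B)\ge 2$ is not really about partition combinatorics; in Grant's treatment it is a technical requirement inherited from the equivariant obstruction machinery he invokes.
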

We will now use \Cref{thm:ub-gsecat} to establish the upper bound on the equivariant parametrized topological complexity.


\begin{theorem}\label{thm:dim-conn-ub}
Let $p:E\to B$ be a (Hurewicz) $G$-fibration and  $E^n_B$ is a $G$-CW complex with $\mathrm{dim}(E^n_B)\geq 2$. Suppose for all subgroups $H$ of $G$, the fibre of $p^H:E^H\to B^H$ is $m$-connected.
Then \[\TC_{G,n}[p:E\to B]< \frac{n\mathrm{dim}(F)+\mathrm{dim}(B)+1}{m+1}+1.\]
\end{theorem}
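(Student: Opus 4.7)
The plan is to apply the equivariant sectional category upper bound, Theorem 3.5 of Grant (stated above as \Cref{thm:ub-gsecat}), directly to the $G$-map $\Pi_n: E^I_B \to E^n_B$. By \Cref{cor:Pin-G-fib}, $\Pi_n$ is a Hurewicz $G$-fibration, hence in particular a Serre $G$-fibration, and by hypothesis $E^n_B$ is a $G$-CW complex with $\dim(E^n_B) \geq 2$. So the only nontrivial input is to check the connectivity hypothesis on the fibres of $\Pi_n^H$ for each closed subgroup $H \leq G$, and to bound $\dim(E^n_B)$ from above.

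First I would identify the $H$-fixed points of the relevant spaces. A direct check shows $(E^n_B)^H = (E^H)^n_{B^H}$ and $(E^I_B)^H = (E^H)^I_{B^H}$, since an $H$-fixed path in $E$ takes values in $E^H$ (and its image under $p$ is an $H$-fixed point of $B$, so it lies in a fibre of $p^H$). Thus $\Pi_n^H$ is precisely the map $(E^H)^I_{B^H} \to (E^H)^n_{B^H}$ associated to the fibration $p^H: E^H \to B^H$, as in \eqref{eq: Pin}. Its fibre over a point $(e_1,\dots,e_n)$ in a common fibre $F_b^H$ of $p^H$ consists of paths in $F_b^H$ passing through $e_1,\dots,e_n$ at the prescribed times $\tfrac{i}{n-1}$; this decomposes as a product of $n-1$ path spaces with fixed endpoints in $F_b^H$, each homotopy equivalent to $\Omega F_b^H$.

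Next, under the assumption that the fibre $F_b^H$ of $p^H$ is $m$-connected, the loop space $\Omega F_b^H$ is $(m-1)$-connected, and so is the product $(\Omega F_b^H)^{n-1}$. Thus the fibre of $\Pi_n^H$ is $(m-1)$-connected for every closed subgroup $H \leq G$, which is exactly the hypothesis of \Cref{thm:ub-gsecat} (with the same value $m$). Applying that theorem yields
\[
\TC_{G,n}[p:E\to B] \;=\; \sct_G(\Pi_n) \;<\; \frac{\dim(E^n_B)+1}{m+1}+1.
\]

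Finally, the fibre product $E^n_B$ fibres over $B$ with fibre $F^n$ (as the pullback of $p^n: E^n \to B^n$ along the diagonal $B \hookrightarrow B^n$), so $\dim(E^n_B) \leq n\dim(F)+\dim(B)$. Substituting gives the claimed inequality. The only point requiring care is the identification of $(E^I_B)^H$ with $(E^H)^I_{B^H}$ and the description of the fibre of $\Pi_n^H$ up to homotopy equivalence as a product of loop spaces; once this is in place, everything else reduces to invoking \Cref{cor:Pin-G-fib} and \Cref{thm:ub-gsecat}.
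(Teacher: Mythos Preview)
Your proposal is correct and follows essentially the same argument as the paper: identify $(\Pi_n)^H$ with the parametrized path fibration associated to $p^H$, verify that its fibre $(\Omega X_H)^{n-1}$ is $(m-1)$-connected, apply \Cref{thm:ub-gsecat}, and then bound $\dim(E^n_B)\leq n\dim(F)+\dim(B)$ via the fibration $E^n_B\to B$ with fibre $F^n$. The only difference is that you spell out the homotopy-equivalence of the fibre of $\Pi_n^H$ with $(\Omega F_b^H)^{n-1}$ in slightly more detail than the paper does.
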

\begin{proof}
Since $\Pi_n:E^I_B\to E^n_B$ is a (Hurewicz) $G$-fibration, it is also a Serre $G$-fibration.  
We can observe that $(E^I_B)^H=(E^H)^I_{B^H}$ and $(E^n_B)^H=(E^H)^n_{B^H}$ for any closed subgroup $H$ of $G$. Thus, the fibration $(\Pi_n)^H:(E^I_B)^H\to (E^n_B)^H$ is, in fact, a sequential parametrized path fibration associated with the fibration $p^H:E^H\to B^H$. In other words, we have $(\Pi_n)^H:(E^H)^I_{B^H}\to (E^H)^n_{B^H}$. Let $X_H$ denote the fibre of $p^H: E^H\to B^H$.
Then, note that the fibre of $(\Pi_n)^H$ is $(\Omega X_H)^{n-1}$.
Since $X_H$ is $m$-connected and $\pi_i((\Omega X_H)^{n-1})=(\pi_i(\Omega X_H))^{n-1}=(\pi_{i+1}(X_H))^{n-1}$, it follows that $(\Omega X_H)^{n-1}$ is $(m-1)$-connected. 
Then from \Cref{thm:ub-gsecat}, we obtain the following inequality:
\begin{equation}\label{eq:eqptcub}
 \TC_{G,n}[p:E\to B]< \frac{\mathrm{dim}(E^n_B)+1}{m+1}+1.   
\end{equation}
It is well known that there is a locally trivial fibration $E^n_B\to B$ with fibre $F^{n}$. Therefore, we have $\mathrm{dim}(E^n_B)\leq n\mathrm{dim}(F)+\mathrm{dim}(B)$. This completes the proof.
\end{proof}

\section{Examples}
\subsection{Equivariant Fadell-Neuwirth fibrations}
In this subsection, we compute sequential equivariant parametrized topological complexity of Fadell-Neuwirth fibrations.

We start with recalling the definition of the ordered configuration space.
The ordered configuration space of $s$ points on $\R^d$ is denoted by $F(\R^d,s)$ and defined as
\[F(\R^d,s)=\{(x_1,\dots,x_s)\in (\R^d)^s \mid x_i\neq x_j ~\text{ for }~ i\neq j\}.\]

\begin{definition}[{\cite{FadellNeuwirth}}\label{def:fadellNeuiwirth}]
The maps \[p: F(\R^d,s+t)\to F(\R^d,s) ~\text{ defined by }~ p(x_1,\dots,x_{s+t})=(x_1,\dots,x_s)\] are called Fadell-Neuwirth fibrations.    
\end{definition}

The sequential parametrized topological complexity of these fibrations have been computed in \cite{SequentialPTC} for odd $d\geq 3$  and  in \cite{FarberPaul2} for even $d\geq 2$.
We now state their theorem.

\begin{theorem}[{\cite[Theorem 1.3]{FarberPaul2}}]
Let $d\geq 2$ be an even integer, and $t\geq 1$, $s\geq 2$. Then 
\[\TC_n[p:F(\R^{d},s+t)\to F(\R^{d},s)]=nt+s-1.\]
\end{theorem}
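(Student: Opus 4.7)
The plan is to prove the two inequalities $\TC_n[p:F(\R^{d},s+t)\to F(\R^{d},s)] \leq nt+s-1$ and $\TC_n[p:F(\R^{d},s+t)\to F(\R^{d},s)] \geq nt+s-1$ separately, with the lower bound being the main technical challenge, particularly for even $d$ where the cohomology ring of configuration spaces has a subtler structure than in the odd case.

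For the upper bound, one option is to invoke the dimension--connectivity bound \Cref{thm:ptc-dim-conn-ub} with $\dim(F(\R^d\setminus Q_s,t))=dt$, $\dim(F(\R^d,s))=ds$ and connectivity $m=d-2$; however, this estimate is generally not sharp. The preferred route is to construct an explicit sequential parametrized motion planning algorithm by adapting the classical non-parametrized planner for configuration spaces due to Farber and Yuzvinsky. Concretely, given a fibred $n$-tuple of configurations in $E^n_B$, one stratifies $E^n_B$ according to the combinatorial coincidence pattern among the coordinates of the tuple and, on each stratum, produces a continuous fibrewise path in $\R^d$ avoiding the $s$ fixed obstacle points; a careful count shows that $nt+s-1$ strata suffice.

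For the lower bound, I would apply the cohomological lower bound, namely the sequential analogue \cite[Proposition 6.3]{SequentialPTC}: it suffices to exhibit $nt+s-2$ cohomology classes $u_1,\ldots,u_{nt+s-2}\in \ker\!\bigl(\triangle^{\ast}\colon H^{\ast}(E^n_B;R)\to H^{\ast}(E;R)\bigr)$ with $u_1\cup\cdots\cup u_{nt+s-2}\neq 0$. The classes are built from the Arnold--Cohen--Taylor generators $A_{ij}$ of $H^{\ast}(F(\R^d,s+t);R)$ pulled back along the $n$ projections $E^n_B\to F(\R^d,s+t)$: for any two factors $\ell,\ell'$ the difference $A_{ij}^{(\ell)}-A_{ij}^{(\ell')}$ automatically restricts to zero on the diagonal and hence lies in $\ker(\triangle^{\ast})$. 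One then chooses $nt$ such differences involving one index in $\{s+1,\ldots,s+t\}$, spread across the $n$ factors to exploit the fibre coordinates, plus $s-2$ additional classes coming from the base coordinates.

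The main obstacle is to verify that the resulting $(nt+s-2)$-fold cup product is nonzero for even $d$. Unlike the odd case, the generators $A_{ij}$ now sit in odd degree $d-1$, so they are skew-commutative with vanishing squares and are constrained by the Arnold infinitesimal braid relations; hence the combinatorics of selecting non-vanishing products is delicate and the ordering of factors matters. I would follow Farber and Paul's approach of computing inside $H^{\ast}(E^n_B;R)$ via the Serre spectral sequence of the locally trivial bundle $E^n_B\to F(\R^d,s)$ with fibre $F(\R^d\setminus Q_s,t)^n$, and arranging the chosen classes so that the product survives all differentials and is detected by a top class dual to an appropriate subvariety, thereby realizing the lower bound $nt+s-1$.
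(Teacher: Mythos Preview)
The paper does not prove this statement: it is quoted verbatim as \cite[Theorem~1.3]{FarberPaul2} and used as a black box to obtain the lower bound in the subsequent equivariant computation. There is therefore no ``paper's own proof'' to compare your proposal against.

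That said, your outline is a faithful sketch of the strategy actually carried out in \cite{FarberPaul2}: the upper bound is obtained there by an explicit fibrewise motion planner refining the Farber--Yuzvinsky construction (the dimension--connectivity bound alone is indeed too weak, as you note), and the lower bound is established via the cohomological criterion applied to differences of Arnold--Cohen--Taylor classes pulled back from the $n$ factors, with the non-vanishing of the long cup product verified through the Totaro--Kriz/Cohen--Taylor description of $H^{\ast}(E^n_B)$. The genuinely hard step---which you correctly flag but do not resolve---is precisely the verification that the $(nt+s-2)$-fold product is nonzero when $d$ is even; this requires a careful choice of indices and a nontrivial spectral-sequence or basis argument specific to the even-dimensional case, and is the technical core of \cite{FarberPaul2}. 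Your proposal identifies the right ingredients but stops short of the decisive computation.
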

\begin{theorem}[{\cite[Theorem 8.1]{SequentialPTC}}\label{thm: sptcFadNeu}]
Let $d\geq 3$ be an odd integer, and $s \geq  2$, $t \geq 1$.
Then \[\TC_n[p:F(\R^{d},s+t)\to F(\R^{d},s)]=nt+s.\]
\end{theorem}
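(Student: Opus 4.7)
The plan is to establish matching upper and lower bounds for the stated equality $\TC_n[p]=nt+s$.

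For the upper bound, I would apply \Cref{thm:ptc-dim-conn-ub}. The fibre of $p$ is $F=F(\R^d\setminus Q_s,t)$, where $Q_s\subset\R^d$ is any fixed set of $s$ distinct points. Iterating the Fadell--Neuwirth projections on $F$, whose consecutive fibres are wedges of $(d-1)$-spheres, shows that $F$ admits a CW model of dimension $(d-1)t$ and is $(d-2)$-connected since $d-1\geq 2$. Similarly, $B=F(\R^d,s)$ has the homotopy type of a CW complex of dimension $(d-1)(s-1)$. Substituting $\dim F=(d-1)t$, $\dim B=(d-1)(s-1)$, and $m=d-2$ in \Cref{thm:ptc-dim-conn-ub} gives
\[
\TC_n[p]<\frac{n(d-1)t+(d-1)(s-1)+1}{d-1}+1=nt+s+\frac{1}{d-1},
\]
and since $\TC_n[p]$ is an integer and $1/(d-1)<1$ for $d\geq 3$, we conclude $\TC_n[p]\leq nt+s$.

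For the lower bound, the idea is to use the cohomological criterion \cite[Proposition 6.3]{SequentialPTC} quoted earlier in the paper: it suffices to produce $nt+s-1$ classes in $\ker(\triangle^\ast)\subseteq H^\ast(E^n_B;\Q)$ whose cup product is non-zero. The fibre product $E^n_B$ is naturally identified with the space of tuples $(x_1,\dots,x_s;y^1,\dots,y^n)$ where $(x_1,\dots,x_s)\in F(\R^d,s)$ and each $y^k=(y^k_1,\dots,y^k_t)\in F(\R^d\setminus\{x_1,\dots,x_s\},t)$. Its rational cohomology is generated by Arnold-type classes in degree $d-1$ associated to ordered pairs of coordinates, subject to the Arnold relations. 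Using differences of the form $c^k_{ij}-c^1_{ij}$ between the $n$ copies of the fibre generators (for $k\geq 2$), together with carefully chosen $a$-generators coming from $H^\ast(F(\R^d,s))$, one assembles a candidate zero-divisor of length $nt+s-1$.

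The hard part will be verifying that this cup product is actually non-zero in $H^\ast(E^n_B;\Q)$. The natural route is to exploit a Leray--Hirsch-type decomposition of $H^\ast(E^n_B)$ as a free module over $H^\ast(F(\R^d,s))$ induced by the tower of Fadell--Neuwirth fibrations restricted to the fibre product, and then reduce non-vanishing to a combinatorial check in the associated graded Orlik--Solomon-type algebra of $F^n$. Crucially, because $d-1$ is even, the Arnold generators may have non-zero squares; this extra room in the cohomology is precisely what raises the cup-length from $nt+s-1$ (the even-$d$ case of Farber--Paul) to $nt+s$ and matches the upper bound in the odd-$d$ case. Performing this combinatorial non-vanishing check uniformly in $s,t,n$ is the principal obstacle of the proof.
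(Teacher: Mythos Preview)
The paper does not prove this statement at all: it is quoted verbatim as \cite[Theorem 8.1]{SequentialPTC} and used as input to compute the equivariant analogue. So there is no proof in this paper to compare your proposal against.

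That said, your outline is the right one and is essentially the strategy of Farber--Paul in the cited reference. The upper bound via \Cref{thm:ptc-dim-conn-ub} is exactly the argument the present paper reproduces (in equivariant form) a few lines later, yielding $\TC_n[p]\le nt+s$. For the lower bound, Farber--Paul indeed use the cohomological criterion with the Arnold-type generators of $H^\ast(E^n_B;\Q)$; your identification of the key point---that for odd $d$ the generators sit in even degree $d-1$, so squares need not vanish, which is what pushes the zero-divisor cup-length up to $nt+s-1$---is correct. The part you flag as ``the principal obstacle'' (the combinatorial non-vanishing check in the Orlik--Solomon-type algebra) is genuinely where the work lies in \cite{SequentialPTC}, and your sketch does not carry it out; but since the present paper simply imports the result, that omission is not a discrepancy with anything here.
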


We now define an action of a permutation group $\Sigma_s$ (on $s$ letters)  on $F(\R^d,s+t)$. Let $\sigma\in \Sigma_s$. Define
\[\sigma\cdot (x_1,\dots,x_s,y_1,\dots,y_t)=(x_{\sigma(1)},\dots,x_{\sigma(s)},y_1,\dots,y_t).\]
Also $\Sigma_s$ acts on $F(\R^d,s)$ by permuting coordinates. Observe that the map $p$ in \Cref{def:fadellNeuiwirth} 
satisfies \[p((x_{\sigma(1)},\dots,x_{\sigma(s)},y_1,\dots,y_t))=(x_{\sigma(1)},\dots,x_{\sigma(s)})=\sigma\cdot p((x_1,\dots,x_s,y_1,\dots,y_t)).\]
Thus, $p$ is a $\Sigma_s$-equivariant map. We now show that $p$ is indeed a $\Sigma_s$-fibration.

The local triviality of $p$ was shown in  \cite[Theorem 1.1]{confspacefadell}, and we also refer the reader to \cite[Theorem 3.3]{FredCohenconfg}.
We will use the ideas from these results to show the $\Sigma_s$-equivariant local triviality of $p$. 
Consider a point $q=(q_1,\dots,q_s)\in F(\R^d,s)$ and mutually disjoint open discs $D_i$ with centres $q_i$. 
Then consider the open set $V=\cup_{\sigma\in \Sigma_s}D_{\sigma(1)}\times \dots\times D_{\sigma(s)}$ containing $q$. Note that $V$ is $\Sigma_s$-invariant and a disjoint union.
Note that the fibre $F_q$ of $p$ at $q$ is given by 
$F(\R^d\setminus \{q_1,\dots,q_s\},t)$. We identify $F_q$ with the last $t$-coordinates of $F(\R^d,s+t)$ via $(y_1,\dots,y_t)\mapsto (q_1,\dots,q_s,y_1,\dots,y_t)$ and assume that $\Sigma_s$ acts trivially on $F_q$. 
We define a homeomorphism $\Phi_q:V\times F_q\to p^{-1}(V)$ in a similar way as it was defined in \cite[Theorem 1.1]{confspacefadell}.
We fix the notation $\bar{y}=(y_1,\dots,y_t)$.
Let $\bar{x}=(x_1,\dots,x_s)\in V$. That means $\bar{x}\in D_{\sigma(1)}\times \dots\times D_{\sigma(s)}$ for some $\sigma\in \Sigma_s$. 
Define \[\Phi_q(x_1,\dots,x_s,\bar{y})=
    (x_1,\dots,x_s,(\gamma_{\sigma(1)}^{-1}(x_1)\circ\dots\circ \gamma_{\sigma(s)}^{-1}(x_s))(\bar{y})),\]
where functions $\gamma_i(x_i):\R^d\to \R^d$ are homeomorphisms defined as in the \cite[Corollary 1.1]{confspacefadell} for $1\leq i\leq s$ and we have denoted $(\gamma_{i}(x))^{-1}$ by $\gamma_i^{-1}(x)$. Recall that these functions have the following properties: for $x\in D_i$, we have $\gamma_i(x)(D_i)=D_i$, $\gamma_i(x)(x)=q_i$, and $\gamma_i(x)(y)=y$ for $y\in \R^d\setminus D_i$.
Note that \[(\gamma_1^{-1}(x_1)\circ\dots\circ \gamma_s^{-1}(x_s))(\bar{y})=((\gamma_1^{-1}(x_1)\circ\dots\circ \gamma_s^{-1}(x_s))(y_1),\dots, (\gamma_1^{-1}(x_1)\circ\dots\circ \gamma_s^{-1}(x_s))(y_t)).\]
For $(x_1,\dots,x_s)\in D_1\times\dots\times D_s$, we note the important observation:
\begin{equation}\label{eq:gamma-composition}
(\gamma_1^{-1}(x_1)\circ\dots\circ \gamma_s^{-1}(x_s))(y_i)=\begin{cases} \gamma_j^{-1}(x_j)(y_i),& \text{ if } y_i\in D_j,\\
y_i,& \text{otherwise}.    
\end{cases}
\end{equation}
We will now prove that $\Phi_q$ is $\Sigma_s$-equivariant.
Let $\bar{x}\in D_{\sigma(1)}\times\dots\times D_{\sigma(s)}$ and $\rho\in\Sigma_s$.
Note that $\rho\cdot \bar{x}\in D_{\sigma(\rho(1))}\times\dots\times D_{\sigma(\rho(s))}$. Then we have
\[\Phi_q(\rho\cdot \bar{x},\bar{y})=(\rho\cdot \bar{x}, (\gamma_{\sigma(\rho(1))}^{-1}(x_{\rho(1)})\circ\dots\circ \gamma_{\sigma(\rho(s))}^{-1}(x_{\rho(s)}))(\bar{y})).\]
Observe that the sets $\{\gamma_{\sigma(r)}^{-1}(x_r) \mid 1\leq r\leq s\}$ and $\{\gamma_{\sigma(\rho(r))}^{-1}(x_{\rho(r)}) \mid 1\leq r\leq s\}$ coincides. Then using \eqref{eq:gamma-composition} we can see that 
\[(\gamma_{\sigma(\rho(1))}^{-1}(x_{\rho(1)})\circ\dots\circ \gamma_{\sigma(\rho(s))}^{-1}(x_{\rho(s)}))(y_i)=(\gamma_{\sigma(1)}^{-1}(x_1)\circ\dots\circ \gamma_{\sigma(s)}^{-1}(x_s))(y_i).\] 
More precisely, if $y_i\in D_j$ for some $1\leq j \leq s$ and $\sigma(k)=j$, then we have \[(\gamma_{\sigma(1)}^{-1}(x_1)\circ\dots\circ \gamma_{\sigma(s)}^{-1}(x_s))(y_i)=\gamma^{-1}_{\sigma(k)}(x_k)(y_i).\]
Moreover, if $\sigma(\rho(r))=j$, then 
\[(\gamma_{\sigma(\rho(1))}^{-1}(x_{\rho(1)})\circ\dots\circ \gamma_{\sigma(\rho(s))}^{-1}(x_{\rho(s)}))(y_i)=\gamma^{-1}_{\sigma(\rho(r))}(x_{\rho(r)})(y_i).\]
Since $\rho(r)=k$, we get $\gamma^{-1}_{\sigma(k)}(x_k)(y_i)=\gamma^{-1}_{\sigma(\rho(r))}(x_{\rho(r)})(y_i)$.
This gives us, \[\Phi_q(\rho\cdot \bar{x},\bar{y})=(\rho\cdot \bar{x}, (\gamma_{\sigma(1)}^{-1}(x_1)\circ\dots\circ \gamma_{\sigma(s)}^{-1}(x_s))(\bar{y}))=\rho\cdot(\bar{x}, (\gamma_{\sigma(1)}^{-1}(x_1)\circ\dots\circ \gamma_{\sigma(s)}^{-1}(x_s))(\bar{y})).\]
This proves that $\Phi_q$ is $\Sigma_s$-equivariant.

Let $F=F(\R^d\setminus \{a_1,\dots,a_s\},t)$ be the fibre of $p$ with $(a_1,\dots,a_s)\in F(\R^d,s)$ as chosen base point.
We will now choose a stable homeomorphism (as in step-$2$ of \cite[Theorem 1.1]{confspacefadell}) $\alpha_q:F\to F_q$ and define $\Phi=\Phi_q\circ (id_V\times \alpha_q)$. Note that $\alpha_q$ is equivariant as $\Sigma_s$ acts trivially on both $F$ and $F_q$. We conclude that $\Phi:V\times F\to p^{-1}(V)$ is $\Sigma_s$-equivariant local trivialization.  
Finally, using \cite[Theorem 2.3]{eqliftingfn}, we conclude that the fibration $p$ is a $G$-fibration.

We are going to use \Cref{thm:dim-conn-ub} to obtain an upper bound on $\TC_{\Sigma_s,n}[p:F(\R^d,s+t)\to F(\R^d, s)]$. 
We denote the fibration $p:F(\R^d,s+t)\to F(\R^d, s)$ by $p:E\to B$.
  Note that $F$ is $(d-2)$-connected.
Since any closed subgroup $H$ of $\Sigma_s$ acts freely on $E$ and $B$, we only need to consider $p^H:E^H\to B^H$ when $H$ is the trivial subgroup. In this case, $p^H=p$. 
Then from \Cref{thm:dim-conn-ub}, we get the following expression.
\[\TC_{\Sigma_s,n}[p:E\to B]< \frac{(d-1)(nt+s-1) +1}{d-1}+1=nt+s + 1/(d-1).\]
This gives us  
\[\TC_{\Sigma_s,n}[p:F(\R^d,s+t)\to F(\R^d, s)]\leq nt+s.\]
Since we have \[\TC_n[p:F(\R^d,s+t)\to F(\R^d, s)]\leq \TC_{\Sigma_s,n}[p:F(\R^d,s+t)\to F(\R^d, s)],\] we establish the following result.


\begin{theorem}\ 
\begin{enumerate}
    \item Let $d\geq 3$ be an odd integer, and $s\geq 2$, $t\geq 1$. Then     \[\TC_{\Sigma_s,n}[p:F(\R^d,s+t)\to F(\R^d, s)]=nt+s.\]
    \item Let $d\geq 2$ be an even integer, and $t\geq 1$, $s\geq 2$. Then 
    \[\TC_{\Sigma_s,n}[p:F(\R^d,s+t)\to F(\R^d, s)]\in \{nt+s-1,nt+s\}.\]
\end{enumerate}
\end{theorem}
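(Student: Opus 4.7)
The plan is to combine the dimension-connectivity upper bound in \Cref{thm:dim-conn-ub} with the subgroup inequality \Cref{cor:subgp-htc-ineq}(1), and then sandwich $\TC_{\Sigma_s,n}[p:E\to B]$ between the known non-equivariant values provided by \Cref{thm: sptcFadNeu} and its even-dimensional counterpart from \cite{FarberPaul2}. The $\Sigma_s$-equivariant local triviality of $p$ (and hence that $p$ is a $\Sigma_s$-fibration) has already been established in the discussion preceding the theorem, so the computation reduces to a short numerical argument.

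For the upper bound, I would verify the hypotheses of \Cref{thm:dim-conn-ub} for the $\Sigma_s$-fibration $p : F(\R^d,s+t)\to F(\R^d,s)$. Any non-trivial permutation $\sigma\in\Sigma_s$ acts on $F(\R^d,s)$ by permuting pairwise distinct coordinates, so the fixed point set $F(\R^d,s)^H$ is empty for every non-trivial closed subgroup $H\leq \Sigma_s$; hence the connectivity hypothesis only needs to be checked for $H=\{e\}$, that is, for $p$ itself. The fibre $F=F(\R^d\setminus\{a_1,\ldots,a_s\},t)$ is $(d-2)$-connected and has the homotopy type of a CW complex of dimension $(d-1)t$, while $F(\R^d,s)$ has the homotopy type of a CW complex of dimension $(d-1)(s-1)$. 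Plugging $m=d-2$, $\dim(F)=(d-1)t$ and $\dim(B)=(d-1)(s-1)$ into \Cref{thm:dim-conn-ub} gives
\[
\TC_{\Sigma_s,n}[p:E\to B] < \frac{(d-1)(nt+s-1)+1}{d-1}+1 = nt+s+\frac{1}{d-1},
\]
so since the left-hand side is an integer, $\TC_{\Sigma_s,n}[p:E\to B]\leq nt+s$.

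For the lower bound, I would apply \Cref{cor:subgp-htc-ineq}(1) with $H=\{e\}$ to obtain
\[
\TC_n[p:F(\R^d,s+t)\to F(\R^d,s)]\leq \TC_{\Sigma_s,n}[p:F(\R^d,s+t)\to F(\R^d,s)].
\]
For odd $d\geq 3$, \Cref{thm: sptcFadNeu} gives $\TC_n[p]=nt+s$, and combining with the upper bound forces equality, establishing (1). For even $d\geq 2$, the cited theorem of Farber–Paul gives $\TC_n[p]=nt+s-1$, and sandwiching between $nt+s-1$ and $nt+s$ yields (2).

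The main technical input, namely the $\Sigma_s$-equivariant local trivialization via the homeomorphisms $\gamma_i(x)$, has already been carried out above, and the remaining step is essentially bookkeeping. The substantive obstacle is intrinsic to (2): pinning down which of the two values $nt+s-1$ and $nt+s$ is actually attained in the even case would require either a sharper equivariant cohomological lower bound involving classes in $H^*_{\Sigma_s}$ not visible non-equivariantly, or an explicit construction of an equivariant sequential parametrized motion planning algorithm with $nt+s-1$ pieces; this appears to be out of reach with the present tools, mirroring the corresponding difficulty in the non-equivariant theory.
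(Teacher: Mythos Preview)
Your proposal is correct and follows essentially the same route as the paper: the upper bound via \Cref{thm:dim-conn-ub} using that only the trivial subgroup contributes (since $\Sigma_s$ acts freely on the base), together with the lower bound $\TC_n[p]\leq \TC_{\Sigma_s,n}[p]$ and the known non-equivariant values. Your account is in fact slightly more explicit than the paper's in recording the CW dimensions $\dim(F)=(d-1)t$ and $\dim(B)=(d-1)(s-1)$ that underlie the numerical computation.
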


\subsection{Some generalized projective product spaces}
Let $\tau$ be any involution on $M$. Consider the following generalized projective product space \[
X(M, S^{2m+1}) :=\displaystyle\frac{M \times S^{2m+1}}{(x,y)\sim (\tau(x), -y)}.
\]
We have a fibre bundle $p:X(M, S^{2m+1})\to \R P^{2m+1}$ with fibre $M$.
We define a $\Z_2$-action on $X(M, S^{2m+1})$ by defining an involution \[\sigma([x,y]):=[x,\tilde{y}],\]
where $y=(y_0,y_1,\dots,y_{2m},y_{2m+1})$ and $\tilde{y}=(-y_1,y_0,\dots, -y_{2m+1},y_{2m})$.
It can be observed that $\sigma$ defines a free involution on $X(M, S^{2m+1})$.
A $\Z_2$-action on $\R P^{2m+1}$ is defined by 
\[g\cdot [y_0:y_1:\dots:y_{2m}:y_{2m+1}]:=[-y_1:y_0:\dots: -y_{2m+1}:y_{2m}].\]
We now observe the following: 
\[p([x,(-y_1,y_0,\dots, -y_{2m+1},y_{2m})])=[-y_1:y_0:\dots: -y_{2m+1}:y_{2m}]=g\cdot [y_0:y_1:\dots:y_{2m}:y_{2m+1}].\]
This shows that $p:X(M,S^{2m+1})\to \R P^{2m+1}$ is a $\Z_2$-equivariant map. One can also show that $p$ is in fact a $G$-fibration.
Observe that \[X(M,S^{2m+1})^n_{\R P^{2m+1}}=\{([x_1,y_1],\dots,[x_n,y_n])\in X(M,S^{2m+1})^n \mid y_i=\pm y_j, 1\leq i,j\leq n\}.\]
Since $\Z_2$ acts freely on $X(M,S^{2m+1})$, we have free diagonal $\Z_2$-action on $X(M,S^{2m+1})^n_{\R P^{2m+1}}$. Thus $X(M,S^{2m+1})^n_{\R P^{2m+1}}$ is $\Z_2$-connected. Therefore using \Cref{prop:eqptc-eqcat-ub} we get the following inequality:
\[\TC_{\Z_2,n}[p:X(M,S^{2m+1})\to \R P^{2m+1}]\leq \ct_{\Z_2}(X(M,S^{2m+1})^n_{\R P^{2m+1}}).\]
From \cite[Proposition 3.5]{EqTC} we get $\ct_{\Z_2}(X(M,S^{2m+1})^n_{\R P^{2m+1}})=\ct(X(M,S^{2m+1})^n_{\R P^{2m+1}}/\Z_2)$. 
Using the dimensional inequality for category, we obtain \[\TC_{\Z_2,n}[p:X(M,S^{2m+1})\to \R P^{2m+1}]\leq n\dim(M)+2m+2.\]
In particular, if $M=\R P^k$ with any involution, then we have \[\TC_n(\R P^k)\leq \TC_{\Z_2,n}[p:X(\R P^k,S^{2m+1})\to \R P^{2m+1}]\leq nk+2m+2.\]
Suppose $k=2^r$, then using \cite[Corollary 8.2]{FarberTCproj}, we have 
\[nk\leq \TC_{\Z_2,n}[p:X(\R P^k,S^{2m+1})\to \R P^{2m+1}]\leq nk+2m+2.\]



\section{Acknowledgement} 
The author expresses sincere gratitude to the reviewer for numerous insightful suggestions, comments, and feedback that significantly improved the exposition and presentation of this article.
In particular, the author thanks the reviewer for prompting the proof that $\Pi_n$ is a $G$-fibration, for suggesting the correct version of \Cref{thm:ub-gsecat} and the modifications in the proof of \Cref{thm:dim-conn-ub}, and for many other valuable suggestions.
The author also thanks Soumen Sarkar for valuable feedback on an earlier version of the article, and Mark Grant for his valuable suggestions and for providing resources related to equivariant principal bundles, which were instrumental in proving \Cref{thm:eqptc-eq-prin-bundle}.  Additionally, the author appreciates Tejas Kalelkar and Ramandeep Singh Arora for helpful discussions. The author acknowledges the support of NBHM through grant 0204/10/(16)/2023/R\&D-II/2789.

.

\bibliographystyle{plain} 
\bibliography{references}

\end{document}